\newcommand{\df}{{\;: =\;}}
\newcommand{\transp}{^{\mathsf{T}}}
\newcommand{\Exp}{\mathrm{E}}
\newcommand{\Prob}{\mathrm{P}}
\newcommand{\E}{\mathrm{e}}
\newcommand{\D}{\mathrm{d}}
\numberwithin{equation}{section}
\newtheorem{theorem}{Theorem}[section]
\newtheorem{lemma}{Lemma}[section]
\newtheorem{corollary}{Corollary}[section]
\theoremstyle{definition}
\newtheorem{definition}{Definition}[section]
\theoremstyle{remark}
\newtheorem{remark}{Remark}[section]
\begin{document}

\title[Relative Value Iteration for
Stochastic Differential Games]{Relative Value Iteration for\\
Stochastic Differential Games}

\author{Ari\ Arapostathis}
\address{Department of Electrical and Computer Engineering,
The University of Texas at Austin, 
1 University Station, Austin, TX 78712}
\email{ari@mail.utexas.edu}

\thanks{Ari Arapostathis was supported in part by ONR under the
Electric Ship Research and Development Consortium.}

\author{Vivek S.\ Borkar}
\address{Department of Electrical Engineering, Indian Institute of Technology,
Powai, Mumbai 400076, India}
\email{borkar.vs@gmail.com}
\thanks{Vivek Borkar was supported in part by Grant \#11IRCCSG014
from IRCC, IIT, Mumbai.}

\author{K.\ Suresh Kumar}
\address{Department of Mathematics, Indian Institute of Technology,
Powai, Mumbai 400076, India}
\email{suresh@math.iitb.ac.in}

\subjclass{Primary, 93E15, 93E20; Secondary, 60J25, 60J60, 90C40}

\begin{abstract}
We study zero-sum stochastic differential games with player dynamics
governed by a nondegenerate controlled diffusion process.
Under the assumption of uniform stability, we establish
the existence of a solution to the Isaac's equation for the
ergodic game and characterize the optimal stationary strategies.
The data is not assumed to be bounded, nor do we assume geometric ergodicity.
Thus our results extend previous work in the literature.
We also study a relative value iteration scheme that takes the
form of a parabolic Isaac's equation.
Under the hypothesis of geometric ergodicity we show
that the relative value iteration converges to the
elliptic Isaac's equation as time goes to infinity.
We use these results to establish
convergence of the relative value iteration for
risk-sensitive control problems under an asymptotic flatness assumption.
\end{abstract}

\maketitle

\section{Introduction}
In this paper we consider a relative value iteration
for zero-sum stochastic differential games.
This relative value iteration is introduced in \cite{AriBorkar}
for stochastic control, and we follow the method introduced
in this paper.

In Section~2, we prove the existence of a solution to the Isaac's equation
corresponding to the ergodic zero-sum stochastic differential game.
We do not assume that the data or the running payoff function is bounded, nor do we
assume geometric ergodicity, so our results extend the work in
\cite{BorkarGhosh}.
In Section~3, we introduce a relative value iteration scheme for the zero-sum
stochastic differential game and prove its convergence
under a hypothesis of geometric ergodicity.
In Section~4, we apply the results from Section 3 and study a value iteration
scheme for risk-sensitive control under an asymptotic flatness assumption.

\section{Problem Description}
We consider zero-sum stochastic differential games with state dynamics
modeled by a controlled nondegenerate diffusion process $X=\{X(t): 0\le t<\infty\}$,
and subject to a long-term average payoff criterion.

\subsection{State dynamics}
Let $U_{i}$, $i=1,2\,$, be compact metric spaces and $V_{i} \;=\;{\mathcal P}(U_{i})$
denote the space of all probability measures on $U_{i}$ with Prohorov topology.
Let
$$
\Bar{b}: \mathbb{R}^d \times U_{1} \times U_{2} \to \mathbb{R}^d
\qquad\text{and}\qquad
\sigma : \mathbb{R}^d \to \mathbb{R}^{d \times d}
$$
 be measurable functions.
Assumptions on $\Bar{b}$ and $\sigma$ will be specified later.
Define $b : \mathbb{R}^d \times V_{1} \times V_{2} \to \mathbb{R}^d$ as
$$
b(x, v_{1}, v_{2}) \;\df\;\int_{U_{1}}\int_{U_{2}}\Bar{b}(x, u_{1}, u_{2})\,
v_{1}(\D{u}_{1})\, v_{2}(\D{u}_{2})\,,
$$
for $x \in \mathbb{R}^d$,
$v_{1} \in V_{1}$ and $v_{2} \in V_{2}$.
We model the controlled diffusion process $X$
via the It\^o s.d.e.
\begin{equation}\label{mainsde}
 \D{X}(t) \;=\;b\bigl(X(t), v_{1}(t), v_{2}(t)\bigr) \,\D{t}
 + \sigma\bigl(X(t)\bigr)\, \D{W}(t) \,.
\end{equation}
All processes on \eqref{mainsde} are defined in a common probability space
$(\varOmega, \, {\mathcal F}, \, P)$
which is assumed to be complete.
The process $W=\{W(t): 0\le t<\infty\}$ is an $\mathbb{R}^d$-valued standard
Wiener process which is independent of
the initial condition $X_{0}$ of \eqref{mainsde}.
Player $i$, with $i=1,2\,$, controls the dynamics $X$ through
her strategy $v_{i}(\cdot)$, a $V_{i}$-valued process which is jointly measurable in
$(t, \omega) \in [0,\infty) \times\varOmega$
and non-anticipative, i.e., for $s < t, \, W(t) - W(s)$ is independent of
$${\mathcal F}_s \;\df \,~ \text{the completion of~}
\sigma(X_{0}, v_{1}(r), v_{2}(r), W(r), r\le s)\;.
$$
 We denote the
set of all such controls (admissible controls) for player $i$
by $\mathcal{U}_{i}$, $i=1,2\,$.

\smallskip
\noindent\emph{Assumptions on the Data:}
We assume the following conditions on the coefficients $\Bar{b}$ and $\sigma$
to ensure existence of a unique solution to \eqref{mainsde}.

\begin{itemize}
\item[(A1)]
The functions $\Bar{b}$ and $\sigma$ are
locally Lipschitz continuous in $x\in\mathbb{R}^d$, uniformly
over $(u_{1},u_{2})\in U_{1}\times U_{2}\,$, and have at most a linear growth
rate in $x\in\mathbb{R}^{d}$, i.e., for some constant $\kappa$,
\begin{equation*}
\|\Bar{b}(x,u_{1},u_{2})\|^{2}+ \|\sigma(x)\|^{2}\le \kappa
\bigl(1 + \|x\|^{2}\bigr) \qquad
\forall (x,u_{1},u_{2})\in\mathbb{R}^{d}\times U_{1}\times U_{2}\;,
\end{equation*}
where $\|\sigma\|^{2}\df
\mathrm{trace}\left(\sigma\sigma^{\mathsf{T}}\right)$,
with $\transp$ denoting the transpose.
Also $\Bar{b}$ is continuous.

\smallskip
\item[(A2)]
For each $R>0$ there exists a constant $\kappa(R) > 0$ such that
$$
z\transp a(x) z \;\ge\; \kappa(R) \|z\|^2
\qquad \text{for~all~} \|x\|\le R \text{~and~} z \in \mathbb{R}^d \,,
$$
where $a\;\df\; \sigma \sigma\transp$.
\end{itemize}

\begin{definition}
For $f \in C^2(\mathbb{R}^d)$ define
\begin{equation*}
 \Bar{L} f (x, u_{1}, u_{2}) \;\df \;
 \Bar{b}(x, u_{1}, u_{2}) \cdot \nabla f(x) \, + \,
 \frac{1}{2}\,\mathrm{tr}\bigl(a(x) \nabla^2 f(x)\bigr)
\end{equation*}
for $x \in \mathbb{R}^d$ and $(u_{1},u_{2})\in U_{1}\times U_{2}\,$.
Also define the \emph{relaxed extended controlled generator} $L$ by
\begin{equation*}
 L f (x, v_{1}, v_{2}) \;\df \;\int_{U_{1}}\int_{U_{2}} L f(x,u_{1}, u_{2})\,
 v_{1}(\D{u}_{1}) \,v_{2}(\D{u}_{2})\,,
\quad f \in C^2(\mathbb{R}^d)\,,
\end{equation*}
for $x \in \mathbb{R}^d$ and $(v_{1},v_{2})\in V_{1}\times V_{2}\,$.

We denote the set of all stationary Markov strategies of player $i$
by ${\mathcal M}_{i}\,$, $i=1,2\,$.
\end{definition}

\subsection{Zero-sum ergodic game}
Let $\Bar{h} : \mathbb{R}^d \times U_{1} \times U_{2} \to [0,\infty)$ be a continuous
function, which is also locally Lipschitz continuous in its first argument.
We define the \emph{relaxed running payoff function}
$h : \mathbb{R}^d \times V_{1} \times V_{2} \to [0,\infty)$ by
$$
h(x,v_{1},v_{2}) \;\df\;\int_{U_{1}}\int_{U_{2}}
\Bar{h}(x, u_{1}, u_{2}) \, v_{1}(\D{u}_{1})\, v_{2}(\D{u}_{2}) \,.
$$
Player 1 seeks to maximize the average payoff given by
\begin{equation}\label{avgpayoff}
 \liminf_{T \to \infty}\; \frac{1}{T}\;
 E_{x} \biggl[\int^T_{0} h(X(t), v_{1}(t), v_{2}(t)) \,\D{t} \biggr]
\end{equation}
over all admissible controls $v_{1}\in\mathcal{U}_{1}\,$, while Player 2 seeks
to minimize \eqref{avgpayoff} over all $v_{2}\in\mathcal{U}_{2}$.
Here $E_{x}$ is the expectation operator corresponding to the probability
measure on the canonical space of the process starting at $X(0)=x$.

Since we shall analyze the average payoff as a limiting case of the discounted
payoff in the `vanishing discount' limit, we shall also consider the infinite
horizon discounted payoff
\begin{equation*}
E_{x} \biggl[\int_{0}^{\infty}\E^{-\alpha t}h(X(t), v_{1}(t), v_{2}(t))\,\D{t}
\biggr]\,,
\end{equation*}
where $\alpha > 0$ is the discount factor.

\smallskip
\noindent\emph{Assumptions on Ergodicity:}
We consider the following ergodicity assumptions:
\begin{itemize}
\item[(A3)]
There exist a positive inf-compact function
$\mathcal{V} \in C^2(\mathbb{R}^d)$
and positive constants $k_{0}$, $k_{1}$ and $k_{2}$ such that
\begin{align*}
 \Bar{L} \mathcal{V}(x, u_{1}, u_{2}) &\;\le\; k_{0} - 2 k_{1} \mathcal{V}(x)\;,
 \\[5pt]
\max_{u_{1} \in U_{1}, u_{2} \in U_{2}} \Bar{h}(x, u_{1}, u_{2})\, &\;\le\; k_{2}\,
\mathcal{V}(x)
\end{align*}
for all $(u_{1},u_{2})\in U_{1}\times U_{2}\,$, and
$x \in \mathbb{R}^d$.
Without loss of generality we assume $\mathcal{V}\ge1$.

\medskip
\item[(A3$\mspace{2mu}^\mathbf\prime$)]
There exist nonnegative inf-compact functions $\mathcal{V} \in C^2(\mathbb{R}^d)$ and
$g \in C(\mathbb{R}^d)$, and positive constants $k_{0}$ and $k_{2}$ such that
\begin{align*}
 \Bar{L} \mathcal{V}(x, u_{1}, u_{2}) \, &\;\le\; k_{0} - g(x)\;, \\[5pt]
 \max_{u_{1} \in U_{1}, u_{2} \in U_{2}} \Bar{h}(x, u_{1}, u_{2}) &\;\le\;
k_{2}\, g(x)
\end{align*}
for all $(u_{1},u_{2})\in U_{1}\times U_{2}\,$, and $x \in \mathbb{R}^d$.
Also,
\begin{equation*}
\frac{\max_{u_{1} \in U_{1}, u_{2} \in U_{2}}\;\Bar{h}(x, u_{1}, u_{2})}{g(x)}
\xrightarrow[\|x\|\to\infty]{} 0\,.
\end{equation*}
Without loss of generality we assume $\mathcal{V}\ge1$ and $g\ge1$.
\end{itemize}

In this section we use assumption (A3$\mspace{2mu}^\mathbf\prime$), while
in Section~3 we employ (A3) which is stronger and equivalent to geometric
ergodicity in the time-homogeneous Markov case.

For the uncontrolled (i.e., Markov) case, (A3$\mspace{2mu}^\mathbf\prime$) is the
so called `g-norm ergodicity' in the terminology of \cite{MT-III}
which implies, in addition to convergence of laws to a unique stationary
distribution, convergence of $\frac{1}{t}\int_{0}^{t}E[f(X(s))]\,\D{s}$
to the corresponding stationary
expectation as $t\uparrow\infty$ for all $f$ with growth rate at most that of
$g$ and vice versa.
Assumption (A3) corresponds to the same with $h = V$ and implies in particular
exponential convergence to stationary averages (and vice versa).
This is the so called geometric ergodicity.
When (A3$\mspace{2mu}^\mathbf\prime$) holds in the controlled case, it implies in
particular tightness of stationary distributions attainable under
stationary Markov controls.
In fact this condition is necessary and sufficient.
See \cite[Lemma~3.3.4]{AriBorkarGhosh} for this and other
equivalent characterizations.
Thus (A3$\mspace{2mu}^\mathbf\prime$) is the best possible condition
for uniform stability in this sense.
While the results of \cite{AriBorkar} can be extended to control problems
when instability is possible but is penalized by the cost structure,
this does not extend naturally to the zero sum game, because what is penalty
for one agent is a reward for the other.

We start with a theorem which characterizes the
value of the game under a discounted
infinite horizon criterion.
For this we need the following notation:
For a continuous function $\mathcal V\colon\mathbb{R}^{d}\to(0,\infty)$,
$C_{\mathcal{V}}(\mathbb{R}^{d})$ denotes the space of functions
in $C(\mathbb{R}^{d})$ satisfying
$\sup_{x\in\mathbb{R}^{d}}\;
\left|\frac{f(x)}{\mathcal{V}(x)}\right|<\infty$.
This is a Banach space under the norm
$$\|f\|_{\mathcal{V}}\;\df\;\sup_{x\in\mathbb{R}^{d}}\;
\left|\frac{f(x)}{\mathcal{V}(x)}\right|\,.$$

\begin{theorem}\label{thm2.1}
Assume \textup{(A1), (A2)} and \textup{(A3$\mspace{2mu}^\prime$)}.
For $\alpha > 0$, there exists a solution
$\varphi_{\alpha} \in C_{\mathcal{V}}(\mathbb{R}^d) \cap C^2(\mathbb{R}^d)$
to the p.d.e.
\begin{equation}\label{Isaac-discounted}
\begin{split}
\alpha \psi_{\alpha}(x) &\;=\; \min_{v_{2}\in V_{2}}\; \max_{v_{1}\in V_{1}}\;
\bigl[L \psi_{\alpha}(x, v_{1}, v_{2}) + h(x,v_{1},v_{2})\bigr]\\[5pt]
&\;=\; \max_{v_{1}\in V_{1}}\; \min_{v_{2}\in V_{2}}\;
\bigl[L \psi_{\alpha}(x, v_{1}, v_{2}) + h(x,v_{1},v_{2}) \bigr]
\end{split}
\end{equation}
and is characterized by
\begin{align*}
\psi_{\alpha}(x) &\;=\; \sup_{v_{1}\in\, \mathcal{U}_{1}}\;
\inf_{v_{2}\in\, \mathcal{U}_{2}}\;
 \Exp_{x} \biggl[\int^{\infty}_{0} \E^{- \alpha t}
h\bigl(X(t), v_{1}(t), v_{2}(t)\bigr) \,\D{t}\biggr]\\[5pt]
&\;=\; \inf_{v_{2}\in\, \mathcal{U}_{2}}\; \sup_{v_{1}\in\, \mathcal{U}_{1}}\;
\Exp_{x} \biggl[\int^{\infty}_{0} \E^{- \alpha t}
h\bigl(X(t), v_{1}(t), v_{2}(t)\bigr) \,\D{t} \biggr] \,.
\end{align*}
\end{theorem}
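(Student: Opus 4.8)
The plan is to first reduce the two-line statement to a single elliptic equation via a pointwise minimax identity, then produce the solution by a truncation-and-limit argument, and finally identify it with the value of the game by a verification argument. For the minimax step, observe that the diffusion term $\frac12\mathrm{tr}(a\nabla^2 f)$ is independent of the controls, so for fixed $x$ and $f\in C^2(\mathbb{R}^d)$ the expression $Lf(x,v_{1},v_{2})+h(x,v_{1},v_{2})$ is affine, indeed bilinear, in the pair $(v_{1},v_{2})$, since both $b$ and $h$ are defined by integration against the product measure $v_{1}\otimes v_{2}$. Because $V_{1}$ and $V_{2}$ are convex and compact in the Prohorov topology, Sion's minimax theorem gives $\min_{v_{2}}\max_{v_{1}}[\,\cdot\,]=\max_{v_{1}}\min_{v_{2}}[\,\cdot\,]$, so the two lines of \eqref{Isaac-discounted} agree and it suffices to solve either one.

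Next I would solve the Dirichlet problem for the Isaac operator on each ball $B_{R}=\{\|x\|<R\}$ with zero boundary data. By (A2) the operator is uniformly elliptic on $B_{R}$, and as a min-max of linear second-order operators it is Lipschitz in the gradient and Hessian, so the existence and interior regularity theory for fully nonlinear uniformly elliptic equations produces a solution $\psi_{\alpha,R}\in C^{2,\beta}(B_{R})\cap C(\Bar{B}_{R})$. The crucial point is a bound uniform in $R$, and this is where (A3$^{\prime}$) enters. Taking $C\ge k_{2}$ and using $L\mathcal{V}\le k_{0}-g$ together with $h\le k_{2}g$, one checks that $C\mathcal{V}+Ck_{0}/\alpha$ is a supersolution of the Isaac equation, while $0$ is a subsolution since $h\ge0$; the comparison principle then yields $0\le\psi_{\alpha,R}\le C\mathcal{V}+Ck_{0}/\alpha$ on $B_{R}$, independently of $R$. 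With this uniform $L^{\infty}_{\mathrm{loc}}$ bound and uniform ellipticity, interior Krylov--Safonov and Evans--Krylov estimates give bounds on $\psi_{\alpha,R}$ in $C^{2,\beta}(K)$ for every compact $K$, uniform in $R$. A diagonal subsequence converges in $C^{2}_{\mathrm{loc}}(\mathbb{R}^d)$ to some $\psi_{\alpha}$; since the coefficients converge locally uniformly, the limit solves \eqref{Isaac-discounted} on all of $\mathbb{R}^d$, and the uniform bound passes to the limit, so $\psi_{\alpha}\in C_{\mathcal{V}}(\mathbb{R}^d)\cap C^2(\mathbb{R}^d)$.

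To identify $\psi_{\alpha}$ with the value, pick measurable selectors $v_{2}^{\ast}\in\mathcal{M}_{2}$ attaining the outer minimum in the first line of \eqref{Isaac-discounted} and $v_{1}^{\ast}\in\mathcal{M}_{1}$ attaining the outer maximum in the second. Fixing player~2 at $v_{2}^{\ast}$, for every $v_{1}\in\mathcal{U}_{1}$ the equation gives $L\psi_{\alpha}(x,v_{1},v_{2}^{\ast}(x))-\alpha\psi_{\alpha}(x)\le-h(x,v_{1},v_{2}^{\ast}(x))$, so Dynkin's formula applied to $\E^{-\alpha t}\psi_{\alpha}(X(t))$ yields $\psi_{\alpha}(x)\ge\Exp_{x}\bigl[\int_{0}^{T}\E^{-\alpha t}h\,\D{t}\bigr]+\Exp_{x}\bigl[\E^{-\alpha T}\psi_{\alpha}(X(T))\bigr]$. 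The Foster--Lyapunov bound $L\mathcal{V}\le k_{0}$ gives $\Exp_{x}[\mathcal{V}(X(T))]\le\mathcal{V}(x)+k_{0}T$, whence $\E^{-\alpha T}\Exp_{x}[\psi_{\alpha}(X(T))]\to0$; letting $T\to\infty$ shows $\psi_{\alpha}(x)\ge\inf_{v_{2}}\sup_{v_{1}}\Exp_{x}[\int_{0}^{\infty}\E^{-\alpha t}h\,\D{t}]$, the upper value. The symmetric argument with $v_{1}^{\ast}$ fixed gives $\psi_{\alpha}(x)\le\sup_{v_{1}}\inf_{v_{2}}(\cdots)$, the lower value. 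Since the lower value never exceeds the upper value, all three quantities coincide, proving both the representation and the existence of the value.

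The main obstacle is the uniform-in-$R$ regularity for the nonlinear Isaac operator combined with the unboundedness of $h$: the a priori estimates must be strong enough to survive the passage $R\to\infty$, and this is precisely where the weighted space $C_{\mathcal{V}}$ and the Lyapunov supersolution do the work that boundedness would otherwise supply. A secondary technical point is to guarantee, by a measurable selection theorem, that the selectors $v_{i}^{\ast}$ exist, and that the stochastic integral in Dynkin's formula has zero expectation after localization; both are controlled by the $\mathcal{V}$-bound on $\psi_{\alpha}$ and the moment estimates furnished by (A3$^{\prime}$).
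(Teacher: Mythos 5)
Your proposal is correct and shares the paper's overall architecture (Dirichlet problems on balls $B_R$, a bound uniform in $R$, passage to the limit, then a verification argument with outer selectors and the It\^o--Dynkin formula), but two of your intermediate steps are genuinely different from the paper's. For the uniform-in-$R$ bound you use a purely analytic comparison argument: $C\mathcal{V}+Ck_0/\alpha$ is a supersolution by (A3$'$) and $0$ is a subsolution, which immediately places $\psi_{\alpha,R}$ in $C_{\mathcal V}$ uniformly; the paper instead bounds $\varphi^R_\alpha$ probabilistically by $\Tilde\psi_\alpha\le k_2\,\Exp_x[\int_0^\infty \E^{-\alpha t}g(X(t))\,\D t]$, exploits monotonicity of $\varphi^R_\alpha$ in $R$, and gets compactness from interior $W^{2,p}$ estimates after freezing measurable selectors (so that the equation becomes linear with measurable coefficients). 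Your route buys a cleaner pointwise bound without invoking the exit-time game representation \eqref{rep1}--\eqref{rep2} at all; the paper's route buys the $L^p$ machinery it reuses later and avoids having to justify Krylov--Safonov/Evans--Krylov (which here are overkill anyway, since $a$ is control-independent and the operator is linear in $\nabla^2\psi$). The verification step is essentially identical to the paper's. The one place where your sketch is thinner than it should be is the transversality term in the direction $\psi_\alpha\le\inf_{v_2}\Exp_x[\cdots]$: after localizing at $T\wedge\tau_R$ you must show $\Exp_x\bigl[\E^{-\alpha\tau_R}\psi_\alpha(X(\tau_R))\,\mathrm{I}\{\tau_R<T\}\bigr]\to0$ as $R\to\infty$, which requires uniform integrability of $\psi_\alpha(X(T\wedge\tau_R))$ and not merely the first-moment bound $\Exp_x[\mathcal V(X(T))]\le\mathcal V(x)+k_0T$ you cite; the paper disposes of exactly this point by citing \cite[Remark A.3.8]{AriBorkarGhosh}, and under (A3$'$) it follows from the supermartingale identity for $\E^{-\alpha t}G_\alpha(X(t))+\int_0^t\E^{-\alpha s}g(X(s))\,\D s$ with $G_\alpha(x)=\Exp_x[\int_0^\infty\E^{-\alpha s}g(X(s))\,\D s]$, so the gap is fillable but should be filled explicitly.
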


\begin{proof}
Let $B_{R}$ denote the open ball of radius $R$ centered at the origin
in $\mathbb{R}^d$.
The p.d.e.
\begin{equation}\label{upperIsaacdisc-dir}
\begin{split}
\alpha \varphi^R_{\alpha}(x) &\;=\;
\min_{v_{2}\in V_{2}}\; \max_{v_{1}\in V_{1}}\;
\bigl[L \varphi^R_{\alpha}(x, v_{1}, v_{2}) + h(x,v_{1},v_{2}) \bigr]\,,\\[5pt]
\varphi^R_{\alpha} &\;=\; 0\qquad \text{on~}\partial B_{R}
\end{split}
\end{equation}
has a unique solution $\varphi^R_{\alpha}$ in
$C^2(B_{R}) \cap C (\overline{B_{R}})$,
see \cite[Theorem~15.12, p.~382]{GilbargTrudinger}.
Since
\begin{equation*}
\min_{v_{2}\in V_{2}}\; \max_{v_{1}\in V_{1}}\;
\bigl[L \varphi^R_{\alpha}(x, v_{1}, v_{2}) + h(x,v_{1},v_{2}) \bigr] \;=\;
\max_{v_{1}\in V_{1}}\; \min_{v_{2}\in V_{2}}\;
\bigl[L \varphi^R_{\alpha}(x, v_{1}, v_{2}) + h(x,v_{1},v_{2}) \bigr]\,,
\end{equation*}
it follows that $\varphi^R_{\alpha} \in C^2(B_{R}) \cap C (\overline{B_{R}})$
is also a solution to
\begin{equation}\label{lowerIsaacdisc-dir}
\begin{split}
\alpha \varphi^R_{\alpha}(x) &\;=\;
\max_{v_{1}\in V_{1}}\; \min_{v_{2}\in V_{2}}\;
\bigl[L \varphi^R_{\alpha}(x, v_{1}, v_{2}) + h(x,v_{1},v_{2}) \bigr]\,,\\[5pt]
\varphi^R_{\alpha} &\;=\; 0\qquad \text{on~}\partial B_{R}\,.
\end{split}
\end{equation}

Let $v^R_{1 \alpha}\colon B_{R}\to V_{1}$ be a measurable selector for the
maximizer in \eqref{lowerIsaacdisc-dir} and $v^R_{2 \alpha}\colon B_{R}\to V_{2}$
be a measurable selector for the minimizer in \eqref{upperIsaacdisc-dir}.
If we let
\begin{equation*}
F\bigl(x,v_{1};\varphi^R_{\alpha}\bigr) \;\df\; \min_{v_{2}\in V_{2}}\;
\bigl[L \varphi^R_{\alpha}(x, v_{1}, v_{2}) + h(x,v_{1},v_{2}) \bigr]\;,
\end{equation*}
then $(x,v_{1})\mapsto F\bigl(x,v_{1};\varphi^R_{\alpha}\bigr)$ is continuous
and also Lipschitz in $x$,
and $\varphi^R_{\alpha}$ satisfies
\begin{align*}
\alpha \varphi^R_{\alpha}(x) &\;=\;
F\bigl(x,v^R_{1 \alpha}(x);\varphi^R_{\alpha}\bigr)
\\[5pt]
&\;=\;
\min_{v_{2}\in V_{2}}\; \bigl[L \varphi^R_{\alpha}(x, v^R_{1 \alpha}(x), v_{2})
+ h(x, v^R_{1 \alpha}(x), v_{2} ) \bigr]\,, \\[5pt]
\varphi^R_{\alpha} &\;=\; 0 \qquad \text{on~}\partial B_{R}\;.
 \end{align*}
By a routine application of It\^o's formula, it follows that
\begin{equation}\label{rep1}
\varphi^R_{\alpha}(x) \;=\;\inf_{v_{2}\in\, \mathcal{U}_{2}}\;
\Exp_{x} \biggl[\int^{\tau_{R}}_{0} \E^{- \alpha t}
h\bigl(X(t), v^R_{1 \alpha}(X(t)),v_{2}(t)\bigr) \,\D{t} \biggr] \,,
\end{equation}
where
$$\tau_{R} \;\df\;\inf\; \{ t \ge 0 : \|X(t) \| \ge R \}$$
and $X$ is the solution to \eqref{mainsde} corresponding to the control pair
$(v^R_{1 \alpha}, v_{2})$, with $v_{2}\in \mathcal{U}_{2}\,$.

Repeating the above argument with the outer minimizer
$v^R_{2 \alpha}$ of \eqref{upperIsaacdisc-dir}, we similarly obtain
\begin{equation}\label{rep2}
\varphi^R_{\alpha}(x) \;=\;\sup_{v_{1}\in\, \mathcal{U}_{1}}\;
\Exp_{x} \biggl[\int^{\tau_{R}}_{0} \E^{- \alpha t}
h\bigl(X(t), v_{1}(t), v^R_{2 \alpha}(X(t))\bigr) \,\D{t} \biggr] \,.
\end{equation}
Combining \eqref{rep1} and \eqref{rep2}, we obtain
\begin{multline*}
\inf_{v_{2}\in\, \mathcal{U}_{2}}\; \sup_{v_{1}\in\, \mathcal{U}_{1}}\;
\Exp_{x}
\biggl[\int^{\tau_{R}}_{0} \E^{-\alpha t} h\bigl(X(t),v_{1}(t),v_{2}(t)\bigr)\,\D{t}
\biggr]
\;\le\;\varphi^R_{\alpha}(x) \\[5pt]
\;\le\; \sup_{v_{1}\in\, \mathcal{U}_{1}}\;
\inf_{v_{2}\in\, \mathcal{U}_{2}}\; \Exp_{x} \biggl[\int^{\tau_{R}}_{0}
\E^{- \alpha t} h\bigl(X(t), v_{1}(t), v_{2}(t)\bigr) \,\D{t} \biggr]\,,
\end{multline*}
which implies that
\begin{align*}
\varphi^R_{\alpha}(x) &\;=\; \sup_{v_{1}\in\, \mathcal{U}_{1}}\;
\inf_{v_{2}\in\, \mathcal{U}_{2}}\; \Exp_{x} \biggl[\int^{\tau_{R}}_{0}
\E^{- \alpha t} h\bigl(X(t), v_{1}(t), v_{2}(t)\bigr) \,\D{t} \biggr]\\[5pt]
&\;=\; \inf_{v_{2}\in\, \mathcal{U}_{2}}\; \sup_{v_{1}\in\, \mathcal{U}_{1}}\;
\Exp_{x} \biggl[\int^{\tau_{R}}_{0} \E^{- \alpha t}
h\bigl(X(t), v_{1}(t), v_{2}(t)\bigr) \,\D{t}
\biggr] \,.
\end{align*}

It is evident that
$\varphi^R_{\alpha}(x) \;\le\; \Tilde{\psi}_{\alpha}(x)$, $x \in \mathbb{R}^d$,
where
$$
\Tilde{\psi}_{\alpha}(x) \;\df\; \sup_{v_{1}\in\, \mathcal{U}_{1}}\;
\inf_{v_{2}\in\, \mathcal{U}_{2}}\;
\Exp_{x} \biggl[\int^{\infty}_{0}
\E^{- \alpha t} h\bigl(X(t), v_{1}(t), v_{2}(t)\bigr) \,\D{t}\biggr]\,,
\quad x \in \mathbb{R}^d\,.
$$
Also $\varphi^R_{\alpha}$ is nondecreasing in $R$.
By Assumption~(A3$\mspace{2mu}^\mathbf\prime$), it follows that
$$
\Tilde{\psi}_{\alpha}(x) \;\le\;
k_{2}\, \Exp_{x} \biggl[\int^{\infty}_{0} \E^{- \alpha t}
g\bigl(X(t)\bigr) \,\D{t}\biggr] \,,
$$
where $X$ is a solution to \eqref{mainsde} corresponding to
some stationary Markov control pair.
Since the function $x \mapsto \Exp_{x} \bigl[\int^{\infty}_{0} \E^{- \alpha t}
g\bigl(X(t)\bigr) \,\D{t}\bigr]$
is continuous, it follows that
$\Tilde{\psi}_{\alpha} \in L^{p}_{loc}(\mathbb{R}^d)$ for $1 < p < \infty$.

Bene{\v{s}}' measurable selection theorem \cite{Benes} asserts that
there exist controls
$(v^R_{1 \alpha}, \, v^R_{2 \alpha})\in {\mathcal M}_{1} \times {\mathcal M}_{2}$
which realize the minimax in
\eqref{upperIsaacdisc-dir}--\eqref{lowerIsaacdisc-dir}, i.e.,
for all $x \in B_{R}$ the following holds:
\begin{equation*}
\max_{v_{1}\in V_{1}}\; \min_{v_{2}\in V_{2}}\;
\bigl[L \varphi^R_{\alpha}(x, v_{1}, v_{2}) + h(x,v_{1},v_{2}) \bigr]
\;= \;
L \varphi^R_{\alpha} \bigl(x, v^R_{1 \alpha}(x), v^R_{2 \alpha}(x) \bigr)
+ h\bigl(x, v^R_{1 \alpha}(x), v^R_{2 \alpha}(x) \bigr) \,.
\end{equation*}
Hence $\varphi^R_{\alpha} \in C^2(B_{R}) \cap C (\overline{B_{R}})$ is a solution to
$$
\alpha \varphi^R_{\alpha} (x) \;=\;
L \varphi^R_{\alpha} \bigl(x, v^R_{1 \alpha}(x), v^R_{2 \alpha}(x) \bigr) +
h \bigl(x, v^R_{1 \alpha}(x), v^R_{2 \alpha}(x)\bigr)\,,\quad x \in B_{R} \,.
$$
Hence by \cite[Lemma~A.2.5, p.~305]{AriBorkarGhosh}, for each $1 < p < \infty$
and $R' > 2R$, we have
\begin{align*}
\bigl\|\varphi^{R'}_{\alpha} \bigr\|_{W^{2,p}(B_{R})} & \,\le\,
K_{1} \Bigl( \bigl\|\varphi^{R'}_{\alpha} \bigr\|_{L^{p}(B_{2R})}
+ \bigl\|L \varphi^{R'}_{\alpha} - \alpha \varphi^{R'}_{\alpha}
\bigr\|_{L^p(B_{2R})}\Bigr)
\\[5pt]
& \,\le\, K_{1} \Bigl( \bigl\|\Tilde{\psi}_{\alpha} \bigr\|_{L^{p}(B_{2R} \bigr)} +
\bigl\|h \bigl(\,\cdot\,, v^{R'}_{1 \alpha}(\,\cdot\,),
v^{R'}_{2 \alpha}(\,\cdot\,)\bigr) \bigr\|_{L^p(B_{2R})} \Bigr)
\\[5pt]
& \,\le\, K_{1} \Bigl( \bigl\|\Tilde{\psi}_{\alpha}
\bigr\|_{L^{p}(B_{2R})} + K_{2}(R) |B_{2R}|^{\nicefrac{1}{p}} \Bigr) \,,
\end{align*}
where $K_{1} >0$ is a constant independent of $R'$ and $K_{2}(R)$ is a constant
depending only on the bound of $h$ on $B_{2R}$.
Using standard approximation arguments involving Sobolev imbedding theorems,
see \cite[p.~111]{AriBorkarGhosh},
it follows that there exists $\psi_{\alpha} \in W^{2, p}_{loc}(\mathbb{R}^d)$
such that
$\varphi^{R}_{\alpha} \uparrow \psi_{\alpha}$ as $R\uparrow\infty$
and $\psi_{\alpha}$ is a solution to
$$
\alpha \psi_{\alpha}(x) \;=\;
\max_{v_{1}\in V_{1}}\; \min_{v_{2}\in V_{2}}\;
\bigl[L \psi_{\alpha}(x, v_{1}, v_{2}) + h(x,v_{1},v_{2}) \bigr] \,.
$$
By standard regularity arguments, see \cite[p.~109]{AriBorkarGhosh},
one can show that
$\psi_{\alpha} \in C^{2, r}(\mathbb{R}^d)$, $0 < r < 1$.
Also using the minimax condition, it follows that
$\psi_{\alpha} \in C^{2, r}(\mathbb{R}^d)$, $0 < r < 1$, is a solution to
\begin{align*}
 \alpha \psi_{\alpha}(x) \;&=\;
\min_{v_{2}\in V_{2}}\; \max_{v_{1}\in V_{1}}\; \bigl[L \psi_{\alpha}(x, v_{1}, v_{2})
+ h(x,v_{1},v_{2}) \bigr] \\[5pt]
\;&=\; \max_{v_{1}\in V_{1}}\; \min_{v_{2}\in V_{2}}\;
\bigl[L \psi_{\alpha}(x, v_{1}, v_{2}) + h(x,v_{1},v_{2}) \bigr]\,.
 \end{align*}
Let $v^{\alpha}_{1}\in{\mathcal M}_{1}$ and $v^{\alpha}_{2}\in{\mathcal M}_{2}$
be an outer maximizing and an outer minimizing selector
for \eqref{Isaac-discounted}, respectively, corresponding to $\psi_{\alpha}$
given above.
Then $\psi_{\alpha}$ satisfies the p.d.e.
$$
\alpha \psi_{\alpha}(x) \;=\;\max_{v_{1} \in V_{1}}\;
\bigl[L \psi_{\alpha} \bigl(x, v_{1}, v^{\alpha}_{2}(x)\bigr) +
h\bigl(x, v_{1}, v^{\alpha}_{2}(x) \bigr) \bigr]\,.
$$
For $v_{1}\in \mathcal{U}_{1}$, let $X$ be the solution to
\eqref{mainsde} corresponding to $(v_{1}, v^{\alpha}_{2})$ and the
initial condition $x \in \mathbb{R}^d$.
Applying the It\^o--Dynkin formula, we obtain
$$
\Exp_{x} \bigl[\E^{- \alpha \tau_{R}} \psi_{\alpha}(X(\tau_{R})) \bigr]
- \psi_{\alpha}(x)
\,\le\, - \Exp_{x} \biggl[\int^{\tau_{R}}_{0} \E^{- \alpha t}
h\bigl(X(t), v_{1}(t),v^{\alpha}_{2}(X(t))\bigr) \,\D{t}\biggr]\,.
$$
Since $\psi_{\alpha} \ge 0$, we have
$$
\psi_{\alpha}(x) \;\ge\; \Exp_{x} \biggl[\int^{\tau_{R}}_{0} \E^{- \alpha t}
h\bigl(X(t), v_{1}(t), v^{\alpha}_{2}(X(t))\bigr) \,\D{t}\biggr]\,.
$$
Using Fatou's lemma we obtain
\begin{equation}\label{rep-discounted1a}
\psi_{\alpha}(x) \;\ge\; \Exp_{x} \biggl[\int^{\infty}_{0}
\E^{- \alpha t} h\bigl(X(t), v_{1}(t), v^{\alpha}_{2}(X(t))\bigr) \,\D{t}\biggr]\,.
\end{equation}
Therefore
\begin{equation}\label{rep-discounted1}
\psi_{\alpha}(x) \;\ge\;
\sup_{v_{1}\in\, \mathcal{U}_{1}}\;
\Exp_{x} \biggl[\int^{\infty}_{0} \E^{- \alpha t}
h\bigl(X(t), v_{1}(t), v^{\alpha}_{2}(X(t))\bigr) \,\D{t}\biggr]\,.
\end{equation}
Similarly, for $v_{2}\in \mathcal{U}_{2}$, let $X$ be the solution to
\eqref{mainsde} corresponding to $(v^{\alpha}_{1}, v_{2})$ and the initial condition
$x \in \mathbb{R}^d$.
By applying the It\^o--Dynkin formula, we obtain
$$
\Exp_{x} \bigl[\E^{- \alpha \tau_{R}} \psi_{\alpha}(X(\tau_{R})) \bigr]
- \psi_{\alpha}(x)
\,\ge\,
- \Exp_{x} \biggl[\int^{\tau_{R}}_{0} \E^{- \alpha t}
h\bigl(X(t),v^{\alpha}_{1}(X(t)),v_{2}(t)\bigr) \,\D{t}\biggr]\,.
$$
Hence
$$
\psi_{\alpha}(x) \;\le\; \Exp_{x} \biggl[\int^{\infty}_{0}
\E^{- \alpha t} h\bigl(X(t),v^{\alpha}_{1}(X(t)),v_{2}(t)\bigr) \,\D{t}
\biggr]
\, + \, \Exp_{x}\bigl[\E^{- \alpha \tau_{R}} \psi_{\alpha}(X(\tau_{R}))\bigr] \,.
$$
By \cite[Remark A.3.8, p.~310]{AriBorkarGhosh}, it follows that
$$
\lim_{R \uparrow \infty }\;
\Exp_{x} \bigl[\E^{- \alpha \tau_{R}} \psi_{\alpha}(X(\tau_{R})) \bigr] \;=\; 0 \,.
$$
Hence, we have
\begin{equation}\label{rep-discounted2a}
\psi_{\alpha}(x) \;\le\; \Exp_{x} \biggl[\int^{\infty}_{0} \E^{- \alpha t}
h\bigl(X(t),v^{\alpha}_{1}(X(t)),v_{2}(t)\bigr) \,\D{t}\biggr]\,.
\end{equation}
Therefore
\begin{equation}\label{rep-discounted2}
\psi_{\alpha}(x) \;\le\;
\inf_{v_{2}\in\, \mathcal{U}_{2}}\; \Exp_{x} \biggl[\int^{\infty}_{0} \E^{- \alpha t}
h\bigl(X(t),v^{\alpha}_{1}(X(t)),v_{2}(t)\bigr) \,\D{t}\biggr]\,.
\end{equation}
By \eqref{rep-discounted1} and \eqref{rep-discounted2}, we obtain
\begin{equation}\label{rep-discounted3}
\psi_{\alpha}(x) \;= \; \Exp_{x} \biggl[\int^{\infty}_{0} \E^{- \alpha t}
h\bigl(X(t),v^{\alpha}_{1}(X(t)),v^{\alpha}_{2}(X(t))\bigr) \,\D{t} \biggr]\,.
\end{equation}
Also by \eqref{rep-discounted1a} and \eqref{rep-discounted2a} we have
\begin{multline*}
\inf_{v_{2}\in\, \mathcal{U}_{2}}\; \sup_{v_{1}\in\, \mathcal{U}_{1}}\;
\Exp_{x} \biggl[\int^{\infty}_{0} \E^{- \alpha t}
h\bigl(X(t),v_{1}(t),v_{2}(t)\bigr) \,\D{t}\biggr]
\;\le\; \psi_{\alpha}(x) \\[5pt]
\le\; \sup_{v_{1}\in\, \mathcal{U}_{1}}\;
\inf_{v_{2}\in\, \mathcal{U}_{2}}\; \Exp_{x} \biggl[\int^{\infty}_{0} \E^{- \alpha t}
h\bigl(X(t),v_{1}(t),v_{2}(t)\bigr) \,\D{t}\biggr]\,.
\end{multline*}
This implies the desired characterization.
\end{proof}

\begin{remark}
Using Theorem~\ref{thm2.1}, one can
easily show that any pair of measurable outer maximizing and
outer minimizing selectors
of \eqref{Isaac-discounted} is a saddle point equilibrium for the
stochastic differential game with state
dynamics given by \eqref{mainsde} and with a discounted criterion under
the running payoff function $h$.
\end{remark}

\begin{theorem}\label{thm2.2}
Assume \textup{(A1), (A2)} and \textup{(A3$\mspace{2mu}^\prime$)}.
Then there exists a solution
$(\beta, \varphi^*)
\in \mathbb{R} \times C_{\mathcal{V}}(\mathbb{R}^d) \cap C^2(\mathbb{R}^d)$
to the Isaac's equation
\begin{equation}\label{Isaac}
\begin{split}
\beta &\;=\; \min_{v_{2}\in V_{2}}\; \max_{v_{1}\in V_{1}}\;
\bigl[L \varphi^*(x, v_{1}, v_{2}) + h(x,v_{1},v_{2}) \bigr]\\
&\;=\; \max_{v_{1}\in V_{1}}\; \min_{v_{2}\in V_{2}}\;
\bigl[L \varphi^*(x, v_{1}, v_{2}) + h(x,v_{1},v_{2}) \bigr]\,,\\[5pt]
\varphi^*(0) &\;=\; 0
\end{split}
\end{equation}
such that $\beta$ is the value of the game.
\end{theorem}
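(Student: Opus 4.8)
The plan is to obtain the ergodic pair $(\beta,\varphi^*)$ as a vanishing-discount limit of the discounted value functions $\psi_\alpha$ furnished by Theorem~\ref{thm2.1}. Fix a reference point (the origin) and define the normalized functions $\Bar\varphi_\alpha \df \psi_\alpha - \psi_\alpha(0)$, together with the scalars $\alpha\psi_\alpha(0)$. The first task is to show that $\{\alpha\psi_\alpha(0)\}_{\alpha>0}$ stays bounded. This follows from Assumption~(A3$^\prime$): applying the It\^o--Dynkin formula to $\mathcal V$ against any stationary Markov pair and using $\Bar L\mathcal V \le k_0 - g$ together with $h \le k_2 g$ gives a uniform bound $\alpha\psi_\alpha(0) \le k_2 k_0$ (roughly), so along a subsequence $\alpha_n\downarrow 0$ we have $\alpha_n\psi_{\alpha_n}(0)\to\beta$ for some finite $\beta\ge 0$.

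Next I would establish local equicontinuity of the family $\{\Bar\varphi_\alpha\}$. The function $\Bar\varphi_\alpha$ satisfies, after subtracting the constant, a discounted Isaac's equation of the form
\begin{equation*}
\alpha\Bar\varphi_\alpha(x) + \alpha\psi_\alpha(0)
\;=\;\max_{v_1\in V_1}\;\min_{v_2\in V_2}\;
\bigl[L\Bar\varphi_\alpha(x,v_1,v_2) + h(x,v_1,v_2)\bigr]\,.
\end{equation*}
Since $\alpha\psi_\alpha(0)$ is bounded and $h$ is controlled by $g$, the right-hand side is locally bounded uniformly in $\alpha$; the interior $W^{2,p}$ estimate from \cite[Lemma~A.2.5, p.~305]{AriBorkarGhosh} (exactly as used in Theorem~\ref{thm2.1}) then yields $W^{2,p}_{\mathrm{loc}}$ bounds on $\Bar\varphi_\alpha$ uniform in $\alpha$, provided I first control $\|\Bar\varphi_\alpha\|_{L^p(B_{2R})}$. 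The latter is the delicate point: I would obtain it from a lower bound on $\psi_\alpha$ via the stability estimate, using that under (A3$^\prime$) the $\mathcal V$-weighted oscillation of $\psi_\alpha$ is uniformly bounded, so that $\Bar\varphi_\alpha \in C_{\mathcal V}(\mathbb R^d)$ with norm independent of $\alpha$. By Sobolev imbedding and a diagonal argument over an exhausting sequence of balls, $\Bar\varphi_{\alpha_n}\to\varphi^*$ in $C^{1,r}_{\mathrm{loc}}$ (hence $C^2_{\mathrm{loc}}$ after bootstrapping), with $\varphi^*\in C_{\mathcal V}(\mathbb R^d)\cap C^2(\mathbb R^d)$ and $\varphi^*(0)=0$. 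Passing to the limit in the displayed equation, and using that the $\min\max$ and $\max\min$ operators are continuous under this convergence (uniform on compacta, with uniform control of the minimax by local boundedness of $h$), gives that $(\beta,\varphi^*)$ solves \eqref{Isaac} with both orders of optimization equal.

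Finally I would identify $\beta$ as the value of the game. Let $v_1^*\in\mathcal M_1$, $v_2^*\in\mathcal M_2$ be outer maximizing and minimizing selectors for \eqref{Isaac}. Applying the It\^o--Dynkin formula to $\varphi^*$ along the diffusion driven by $(v_1,v_2^*)$ and dividing by $T$, the gradient term $\Exp_x[\varphi^*(X(T))]/T$ vanishes as $T\to\infty$ because $\varphi^*\in C_{\mathcal V}$ and the stationary distributions are tight under (A3$^\prime$); this yields
\begin{equation*}
\beta \;\ge\; \limsup_{T\to\infty}\,\frac{1}{T}\,
\Exp_x\biggl[\int_0^T h\bigl(X(t),v_1(t),v_2^*(X(t))\bigr)\,\D t\biggr]
\end{equation*}
for every $v_1\in\mathcal U_1$, and the symmetric argument along $(v_1^*,v_2)$ gives the reverse inequality with $\inf_{v_2}$, so that $\beta$ is simultaneously an upper value and a lower value, hence the value of the ergodic game. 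The main obstacle I anticipate is the uniform-in-$\alpha$ control of $\Bar\varphi_\alpha$ needed for equicontinuity: unlike the bounded-data setting one cannot invoke a Harnack inequality directly on $\psi_\alpha$, and the argument must instead lean on the Lyapunov structure of (A3$^\prime$) to bound the oscillation of $\psi_\alpha$ in the $\mathcal V$-norm before the PDE estimates can be applied.
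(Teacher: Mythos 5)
Your overall strategy---vanishing discount limit of the $\psi_\alpha$ from Theorem~\ref{thm2.1}, local compactness of $\psi_\alpha-\psi_\alpha(0)$, passage to the limit in the Isaac's equation, then verification via It\^o--Dynkin with the outer selectors---is exactly the paper's route. But there are two genuine gaps. The first is the one you yourself flag: the uniform-in-$\alpha$ local bound on $\Bar\varphi_\alpha=\psi_\alpha-\psi_\alpha(0)$. You gesture at ``the $\mathcal V$-weighted oscillation of $\psi_\alpha$ is uniformly bounded'' but give no mechanism for it; in fact you only need \emph{local} uniform bounds, and the paper gets them from the Harnack-based oscillation estimates of \cite[Theorem~3.7.4]{AriBorkarGhosh} applied to the discounted value under the (stable, by (A3$'$)) stationary Markov selectors $(v_1^\alpha,v_2^\alpha)$: the $W^{2,p}(B_R)$ norm of $\psi_\alpha-\psi_\alpha(0)$ is controlled by $\beta[v_1^\alpha,v_2^\alpha]\big/\eta[v_1^\alpha,v_2^\alpha](B_R)$ plus the local sup of $h$, and one then needs $\sup_\alpha\beta[v_1^\alpha,v_2^\alpha]<\infty$ and $\inf_\alpha\eta[v_1^\alpha,v_2^\alpha](B_R)>0$, both of which come from (A3$'$) via \cite{AriBorkarGhosh}. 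So contrary to your closing remark, the argument does lean on a Harnack-type inequality; the Lyapunov structure enters to make the resulting constants $\alpha$-independent. Claiming a uniform $C_{\mathcal V}$ bound on $\Bar\varphi_\alpha$ before the limit is stronger than what is proved and is not justified by what you wrote.

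The second gap is more serious and is in the verification step. You dismiss the boundary term $\Exp_x[\varphi^*(X(T))]/T$ on the grounds that $\varphi^*\in C_{\mathcal V}$ and stationary distributions are tight. This does not work: the competing control $v_1\in\mathcal U_1$ is an arbitrary admissible (non-Markov) process, so there is no stationary distribution to invoke, and (A3$'$) only gives $\Bar L\mathcal V\le k_0-g\le k_0$, hence $\Exp_x[\mathcal V(X(T))]\le\mathcal V(x)+k_0T$; with only $|\varphi^*|\le\|\varphi^*\|_{\mathcal V}\,\mathcal V$ the quotient $\Exp_x[\varphi^*(X(T))]/T$ is merely bounded, not vanishing. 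What is actually needed is the strictly stronger statement $\varphi^*\in o(\mathcal V)$, and establishing it is a substantial portion of the paper's proof: one sandwiches $\varphi^*$ between stochastic representations over the hitting time $\breve\tau_r$ of a small ball (inequalities \eqref{appendixeq1} and \eqref{appendixeq6}), obtained by It\^o--Dynkin arguments on \eqref{Isaac} and on the discounted equation followed by the limit $\alpha\downarrow0$, and then invokes \cite[Lemma~3.7.2, p.~125]{AriBorkarGhosh} both to conclude $\varphi^*\in o(\mathcal V)$ and to justify $\Exp_x[\varphi^*(X(t))]/t\to0$ in the verification. Without this step your final identification of $\beta$ with the value of the game does not close.
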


\begin{proof}
For $(v_{1}, v_{2}) \in {\mathcal M}_{1} \times {\mathcal M}_{2}$, define
$$
J_{\alpha}(x, v_{1}, v_{2}) \;\df\;
\Exp_{x} \biggl[\int^{\infty}_{0} \E^{- \alpha t}
h\bigl(X(t), v_{1}(X(t)), v_{2}(X(t))\bigr)
\,\D{t} \biggr]\,, \quad x \in \mathbb{R}^d\;,
$$
where $X$ is a solution to \eqref{mainsde} corresponding to
$(v_{1}, v_{2})\in {\mathcal M}_{1} \times {\mathcal M}_{2}$.
Hence from \eqref{rep-discounted3}, we have
$$
 \psi_{\alpha}(x) \;=\; J_{\alpha}(x, v^{\alpha}_{1}, v^{\alpha}_{2}) \,,
$$
where $(v^{\alpha}_{1}, v^{\alpha}_{2}) \in {\mathcal M}_{1} \times {\mathcal M}_{2}$
is a pair of measurable outer maximizing and outer minimizing
selectors of \eqref{Isaac-discounted}.
Using (A3$\mspace{2mu}^\prime$), it is easy to see that
$(v^{\alpha}_{1}, v^{\alpha}_{2})$
is a pair of stable stationary Markov controls.
Hence by the arguments in the proof of
\cite[Theorem~3.7.4, pp.~128--131]{AriBorkarGhosh}, we have the following estimates:
\begin{multline}\label{ergodicestimate1}
\|\psi_{\alpha} - \psi_{\alpha}(0) \|_{W^{2, p}(B_{R})}\;\le\;
\frac{K_3}{\eta[v^{\alpha}_{1}, v^{\alpha}_{2}](B_{R})}
\biggl( \frac{\beta[v^{\alpha}_{1}, v^{\alpha}_{2}]}
{\eta[v^{\alpha}_{1}, v^{\alpha}_{2}](B_{R})}\\[5pt]
 + \max_{(x, v_{1}, v_{2}) \in B_{4R} \times V_{1} \times V_{2}}h(x,v_{1},v_{2}) 
 \biggr) \,,
\end{multline}
\begin{equation}\label{ergodicestimate2}
\sup_{x \in B_{R})} \alpha \psi_{\alpha}(x) \;\le\; K_3
\biggl( \frac{\beta[v^{\alpha}_{1}, v^{\alpha}_{2}]}
{\eta[v^{\alpha}_{1}, v^{\alpha}_{2}](B_{R})} +
\max_{(x, v_{1}, v_{2}) \in B_{4R} \times V_{1} \times V_{2}}h(x,v_{1},v_{2})
\biggr) \,,
\end{equation}
where $\eta[v^{\alpha}_{1}, v^{\alpha}_{2}]$ is the unique invariant probability
measure of the process \eqref{mainsde} corresponding to
$(v^{\alpha}_{1}, v^{\alpha}_{2})$ and
$$
\beta[v^{\alpha}_{1}, v^{\alpha}_{2}] \;\df\;\int_{\mathbb{R}^d}
h\bigl(x, v^{\alpha}_{1}(x), v^{\alpha}_{2}(x)\bigr)\,
\eta[v^{\alpha}_{1}, v^{\alpha}_{2}](\D{x}) \;.
$$
It follows from \cite[Corollary 3.3.2, p.~97]{AriBorkarGhosh} that
\begin{equation}\label{ergodicestimate3}
\sup_{\alpha > 0}\;\beta[v^{\alpha}_{1}, v^{\alpha}_{2}] \;< \; \infty \;.
\end{equation}
Also from \cite[(2.6.9a); p.~69 and (3.3.9); p.~97]{AriBorkarGhosh} it follows that
\begin{equation}\label{ergodicestimate4}
\inf_{\alpha > 0}\;\eta[v^{\alpha}_{1}, v^{\alpha}_{2}](B_{R}) \;>\; 0 \;.
\end{equation}
Combining \eqref{ergodicestimate1}--\eqref{ergodicestimate4},
we have
\begin{equation}\label{ergodicestimate5}
\begin{split}
\|\psi_{\alpha} - \psi_{\alpha}(0) \|_{W^{2, p}(B_{R})} & \;\le\; K_4\,,\\[5pt]
\sup_{x \in B_{R}}\; \alpha \psi_{\alpha}(x) & \;\le\; K_4\,,
\end{split}
\end{equation}
where $K_4 > 0$ is a constant independent of $\alpha > 0$.

Define
$$
\Bar{\psi}_{\alpha}(x) \;\df\;\psi_{\alpha}(x) - \psi_{\alpha}(0)\,,
\quad x \in \mathbb{R}^d \,.
$$
In view of \eqref{ergodicestimate5}, one can use the arguments in
\cite[Lemma~3.5.4, pp.~108--109]{AriBorkarGhosh}
to show that along some sequence $\alpha_n \downarrow 0$,
$\alpha_n \psi_{\alpha}(0)$ converges to a constant $\varrho$ and
$\Bar{\psi}_{\alpha_n}$ converges uniformly on compact sets to a function
$\varphi^{*} \in C^{2}(\mathbb{R}^d)$, where the pair
$(\varrho, \, \varphi^{*})$ is a solution to the p.d.e.
\begin{align*}
\varrho &\;=\; \min_{v_{2}\in V_{2}}\; \max_{v_{1}\in V_{1}}\;
\bigl[L \varphi^{*}(x, v_{1}, v_{2}) + h(x,v_{1},v_{2}) \bigr]\;,\\[5pt]
\varphi^{*}(0) &\;=\; 0\;.
\end{align*}
Moreover, using the Isaac's condition, it follows that
$(\varrho, \varphi^{*}) \in \mathbb{R} \times C^2(\mathbb{R}^d)$
satisfies \eqref{Isaac}.

We claim that $\varphi^{*} \in o(\mathcal{V})$, i.e.,
$\frac{\varphi^{*}(x)}{\mathcal{V}(x)}\to 0$ as $\|x\|\to\infty$.
To prove the claim let
$(v^{*}_{1}, v^{*}_{2})\in{\mathcal M}_{1}\times{\mathcal M}_{2}$
be a pair of measurable outer maximizing and
outer minimizing selectors of \eqref{Isaac} corresponding to $\varphi^{*}$.
Let $X$ be the solution to
\eqref{mainsde} under the control $(v^{*}_{1}, v^{*}_{2})$.
Then by an application of the It\^o--Dynkin formula and the help of Fatou's lemma,
we can show that for all $x \in \mathbb{R}^d$
\begin{equation}\label{appendixeq1}
\varphi^{*}(x) \;\ge\;\Exp_{x}\biggl[\int^{\breve{\tau}_{r}}_{0}
\Bigl(h\bigl(X(t), v^{*}_{1}(X(t)), v^{*}_{2}(X(t))\bigr) - \varrho\Bigr) \,\D{t}
\biggr] + \min_{ \|y \| \,=\,r} \varphi^{*}(y) \,,
\end{equation}
where
$$\breve{\tau}_{r} \;=\;\inf\; \{ t \ge 0 : \|X(t)\| \le r \}\,.$$
Let $v^{\alpha}_{1}\in{\mathcal M}_{1}$ be a measurable outer maximizing selector in
\eqref{Isaac-discounted}.
Then the function $\psi_{\alpha} \in C^{2, r}(\mathbb{R}^d)$ given in
Theorem~\ref{thm2.1} satisfies the p.d.e.
\begin{equation}\label{appendixeq2}
\alpha \psi_{\alpha} \;=\;\min_{v_{2} \in V_{2}}\;
\bigl[L \psi_{\alpha}(x, v^{\alpha}_{1}(x), v_{2})
 + h(x, v^{\alpha}_{1}(x), v_{2}) \bigr] \,.
\end{equation}
Let $X$ be the solution to \eqref{mainsde} under the control
$(v^{\alpha}_{1}, v_{2})$, with $v_{2} \in \mathcal{U}_{2}$,
and initial condition $x \in \mathbb{R}^d$.
Then by applying the It\^o--Dynkin formula to
$\E^{-\alpha t} \psi_{\alpha}(X(t))$ and using \eqref{appendixeq2}, we obtain
\begin{equation*}
\Exp_{x} \bigl[\E^{- \alpha (\breve{\tau}_{r} \wedge \tau_{R})}
\psi_{\alpha}(X(\breve{\tau}_{r} \wedge \tau_{R})) \bigr]
- \psi_{\alpha}(x)
\ge\;- \Exp_{x} \biggl[\int^{\breve{\tau}_{r} \wedge \tau_{R}}_{0}
h\bigl(X(t),v^{\alpha}_{1}(X(t)),v_{2}(t)\bigr)\,\D{t} \biggr] \,,
\end{equation*}
which we write as
\begin{equation}\label{appendixeq3}
\psi_{\alpha}(x) \;\le\; \Exp_{x} \biggl[\int^{\breve{\tau}_{r}}_{0}
h\bigl(X(t),v^{\alpha}_{1}(X(t)),v_{2}(t)\bigr)\,\D{t} \biggr]
+ \Exp_{x} \bigl[\E^{- \alpha (\breve{\tau}_{r} \wedge \tau_{R})}
\psi_{\alpha}(X(\breve{\tau}_{r}\wedge \tau_{R})) \bigr] \,.
\end{equation}
Using \cite[Remark A.3.8, p.~310]{AriBorkarGhosh}, it follows that
\begin{equation}\label{appendixeq4}
\Exp_{x} \bigl[\E^{- \alpha \tau_{R}}\psi_{\alpha}(X(\tau_{R}))
I \{\breve{\tau}_{r} \ge \tau_{R}\} \bigr] \;\le\;
\Exp_{x} \bigl[\E^{- \alpha \tau_{R}}\psi_{\alpha}(X( \tau_{R})) \bigr]
\xrightarrow[R \to \infty]{} 0\,.
\end{equation}
Hence from \eqref{appendixeq3} and \eqref{appendixeq4}, we obtain
$$
\psi_{\alpha}(x) \;\le\; \Exp_{x} \biggl[\int^{\breve{\tau}_{r}}_{0}
h\bigl(X(t),v^{\alpha}_{1}(X(t)),v_{2}(t)\bigr)\,\D{t} \biggr]
+ \Exp_{x} \bigl[\E^{- \alpha \breve{\tau}_{r} }
\psi_{\alpha}(X(\breve{\tau}_{r})) \bigr] \,.
$$
Therefore,
\begin{align*}
\Bar{\psi}_{\alpha}(x) &\;\le\; \Exp_{x}\biggl[\int^{\breve{\tau}_{r}}_{0}
h\bigl(X(t),v^{\alpha}_{1}(X(t)),v_{2}(t)\bigr) \,\D{t} \biggr]
+ \Exp_{x}\bigl[\E^{- \alpha \breve{\tau}_{r} }\psi_{\alpha}(X(\breve{\tau}_{r}))
- \psi_{\alpha}(0)\bigr]\\[5pt]
&\;=\; \Exp_{x} \biggl[\int^{\breve{\tau}_{r}}_{0}
\Bigl(h\bigl(X(t),v^{\alpha}_{1}(X(t)),v_{2}(t)\bigr) - \varrho\Bigr)\,\D{t}\biggr]
+ \Exp_{x}\bigl[\psi_{\alpha}(X(\breve{\tau}_{r})) - \psi_{\alpha}(0)\bigr] \\[5pt]
& \mspace{200mu}+ \Exp_{x}\bigl[\alpha^{-1}(1- \E^{-\alpha \breve{\tau}_{r}})
\bigl(\varrho - \alpha \psi_{\alpha}(X(\breve{\tau}_{r}))\bigr)\bigr]\\[5pt]
&\;\le\; \Exp_{x} \biggl[\int^{\breve{\tau}_{r}}_{0}
\Bigl(h\bigl(X(t),v^{\alpha}_{1}(X(t)),v_{2}(t)\bigr) - \varrho\Bigr)\,\D{t}\biggr]
+ M(r)
\\[5pt]
 &\mspace{200mu} +
 \Exp_{x} \bigl[\breve{\tau}_{r}] \sup_{ \|y \|\,=\, r}\;
 \bigl| \varrho- \alpha \psi_{\alpha}(y)\bigr|\\[5pt]
&\;\le\; \sup_{v_{1} \in {\mathcal M}_{1}}\;
\Exp_{x} \biggl[\int^{\breve{\tau}_{r}}_{0}
\Bigl(h\bigl(X(t),v_{1}(X(t)),v_{2}(t)\bigr) - \varrho\Bigr)\,\D{t} \biggr]\\[5pt]
&\mspace{200mu} + M(r) + \sup_{ \|y \|\,=\, r}\;
 \bigl| \varrho- \alpha \psi_{\alpha}(y)\bigr|
\sup_{v_{1} \in {\mathcal M}_{1}}\; \Exp_{x} [ \breve{\tau}_{r}]
\end{align*}
for some nonnegative constant $M(r)$ such that $M(r)\to0$ as $r\downarrow0$.
Next from the definition of $\varphi^{*}$, by letting $\alpha \downarrow 0$ along
the sequence given in the proof of Theorem~\ref{thm2.2}, we obtain
\begin{equation}\label{appendixeq6}
\varphi^{*}(x) \;\le\;\sup_{v_{1} \in {\mathcal M}_{1}}\;
\Exp_{x} \biggl[\int^{\breve{\tau}_{r}}_{0}
\Bigl(h\bigl(X(t),v_{1}(X(t)),v_{2}(t)\bigr) - \varrho\Bigr)\,\D{t} \biggr] + M(r) \,.
\end{equation}
By combining \eqref{appendixeq1} and \eqref{appendixeq6}, the result follows by
\cite[Lemma~3.7.2, p.~125]{AriBorkarGhosh}.
This completes the proof of the claim.

Let $(v^{*}_{1}, v^{*}_{2})\in{\mathcal M}_{1}\times{\mathcal M}_{2}$
be a pair of measurable outer maximizing and minimizing selectors
in \eqref{Isaac} corresponding to $\varphi^{*}$.
Then $(\varrho, \varphi^{*})$ satisfies the p.d.e.
$$
\varrho \;= \; \max_{v_{1} \in V_{1}}\; \bigl[L \varphi^{*}
\bigl(x, v_{1}, v^{*}_{2}(x)\bigr)
+ h\bigl(x, v_{1}, v^{*}_{2}(x)\bigr) \bigr] \,.
$$
Let $v_{1}\in \mathcal{U}_{1}$ and $X$ be the process
in \eqref{mainsde} under the control
$(v_{1}, v^{*}_{2})$ and initial condition $x \in \mathbb{R}^d$.
By applying the It\^o--Dynkin formula, we obtain
$$
\Exp_{x} \bigl[\varphi^{*}(X(t \wedge \tau_{R})) \bigr] - \varphi^{*}(x) \;\le\;
- \Exp_{x} \biggl[\int^{t \wedge \tau_{R}}_{0}
\Bigl(h\bigl(X(t), v_{1}(t), v^{*}_{2}(X(t))\bigr) - \varrho\Bigr) \,\D{t}\biggr].
$$
Hence
$$
\varrho\,t \;\ge\; \Exp_{x} \biggl[\int^{t \wedge \tau_{R}}_{0}
h\bigl(X(t), v_{1}(t), v^{*}_{2}(X(t))\bigr) \,\D{t}\biggr] +
 \Exp_{x} \bigl[\varphi^{*}(X(t \wedge \tau_{R})) \bigr] - \varphi^{*}(x)
$$
for all $t \ge 0$.
Using Fatou's lemma and \cite[Lemma~3.7.2, p.~125]{AriBorkarGhosh}, we obtain
$$
\varrho\,t \,\ge\, \Exp_{x} \biggl[\int^{t}_{0}
h\bigl(X(t), v_{1}(t),v^{*}_{2}(X(t))\bigr)
\,\D{t}\biggr] +
 \Exp_{x} \bigl[\varphi^{*}(X(t)) \bigr] - \varphi^{*}(x)\,,\quad t \ge 0 \,.
$$
Dividing by $t$ and taking limits again
using \cite[Lemma~3.7.2, p.~125]{AriBorkarGhosh},
we obtain
$$
\varrho \;\ge\; \liminf_{t \to \infty}\; \frac{1}{t}\;
\Exp_{x} \biggl[\int^{t}_{0} h\bigl(X(t), v_{1}(t),v^{*}_{2}(X(t))\bigr) \,\D{t}
\biggr].
$$
Since $v_{1} \in \mathcal{U}_{1}$ was arbitrary, we have
\begin{align}\label{ergodicrep1}
\varrho & \;\ge\; \sup_{v_{1} \in\, \mathcal{U}_{1}}\; \liminf_{t \to \infty}\;
\frac{1}{t}\; \Exp_{x} \biggl[\int^{t}_{0}
h\bigl(X(t),v_{1}(t),v^{*}_{2}(X(t))\bigr) \,\D{t}\biggr]\notag\\[5pt]
& \;\ge\; \inf_{v_{2}\in\, \mathcal{U}_{2}}\; \sup_{v_{1} \in\, \mathcal{U}_{1}}\;
\liminf_{t \to \infty}\;
\frac{1}{t}\;
\Exp_{x} \biggl[\int^{t}_{0} h\bigl(X(t), v_{1}(t), v_{2}(t)\bigr) \,\D{t}
\biggr]\;.
\end{align}
The pair $(\varrho, \varphi^{*})$ also satisfies the p.d.e.
$$
\varrho \;=\;\min_{v_{2} \in V_{2}}\;
\bigl[L \varphi^{*}(x, v^{*}_{1}(x), v_{2}) + h(x, v^{*}_{1}(x), v_{2}) \bigr] \,.
$$
Let $v_{2} \in \mathcal{U}_{2}$ and $X$ be the
process in \eqref{mainsde} corresponding to
$(v^{*}_{1}, v_{2})$ and initial condition $x \in \mathbb{R}^d$.
By applying the It\^o--Dynkin formula, we obtain
$$
\Exp_{x} \bigl[\varphi^{*}(X(t \wedge \tau_{R})) \bigr] - \varphi^{*}(x) \;\ge\;
- \Exp_{x} \biggl[\int^{t \wedge \tau_{R}}_{0}
\Bigl(h\bigl(X(t),v^{*}_{1}(X(t)),v_{2}(t)\bigr) - \varrho\Bigr) \,\D{t}\biggr]\,.
$$
Hence
$$
\varrho\, \Exp_{x}[t \wedge \tau_{R}] \;\le\; \Exp_{x} \biggl[\int^{t}_{0}
h\bigl(X(t),v^{*}_{1}(X(t)),v_{2}(t)\bigr) \,\D{t}
+\varphi^{*}(X(t \wedge \tau_{R})) \biggr] - \varphi^{*}(x) \,.
$$
Next, by letting $R \to \infty$ and
using the dominated convergence theorem for the l.h.s.\ and
\cite[Lemma~3.7.2, p.~125]{AriBorkarGhosh} for the r.h.s., we obtain
$$
\varrho\,t \;\le\; \Exp_{x} \biggl[\int^{t}_{0}
h\bigl(X(t),v^{*}_{1}(X(t)),v_{2}(t)\bigr) \,\D{t}\biggr]
+ \Exp_{x} \bigl[\varphi^{*}(X(t)) \bigr] - \varphi^{*}(x) \,.
$$
Also by \cite[Lemma~3.7.2, p.~125]{AriBorkarGhosh}, we obtain
$$
\varrho \;\le\; \liminf_{t \to \infty}\; \frac{1}{t}\;
\Exp_{x} \biggl[\int^{t}_{0}
h\bigl(X(t),v^{*}_{1}(X(t)),v_{2}(t)\bigr) \,\D{t}\biggr] \,.
$$
Since $v_{2} \in \mathcal{U}_{2}$ was arbitrary, we have
\begin{align}\label{ergodicrep2}
\varrho &\;\le\; \inf_{v_{2}\in\, \mathcal{U}_{2}}\;
\liminf_{t \to \infty}\; \frac{1}{t}\;
\Exp_{x}\biggl[\int^{t}_{0} h\bigl(X(t),v^{*}_{1}(X(t)),v_{2}(t)\bigr)\,\D{t}
\biggr]
\notag\\[5pt]
&\;\le\; \sup_{v_{1}\in\, \mathcal{U}_{1}}\;
\inf_{v_{2}\in\, \mathcal{U}_{2}}\; \liminf_{t \to \infty}\;
\frac{1}{t}\;
\Exp_{x} \biggl[\int^{t}_{0} h\bigl(X(t), v_{1}(t), v_{2}(t)\bigr) \,\D{t}
\biggr] \;.
\end{align}
Combining \eqref{ergodicrep1} and \eqref{ergodicrep2}, we obtain
\begin{equation*}
\begin{split}
\varrho &\;=\; \inf_{v_{2}\in\, \mathcal{U}_{2}}\;\sup_{v_{1}\in\, \mathcal{U}_{1}}\;
\liminf_{t \to \infty}\;
 \frac{1}{t}\;
\Exp_{x} \biggl[\int^{t}_{0} h\bigl(X(t),v_{1}(t),v_{2}(t)\bigr) \,\D{t} \biggr]
\\[5pt]
&\;=\; \sup_{v_{1}\in\, \mathcal{U}_{1}}\;\inf_{v_{2}\in\, \mathcal{U}_{2}}\;
\liminf_{t \to \infty}\;\frac{1}{t}\;
\Exp_{x}\biggl[\int^{t}_{0} h\bigl(X(t), v_{1}(t), v_{2}(t)\bigr)\,\D{t}\biggr] \;,
\end{split}
\end{equation*}
i.e. $\varrho = \beta$, the value of the game. This completes the proof.
\end{proof}

\begin{remark}
Using Theorem~\ref{thm2.2}, one can
easily prove that any pair of measurable outer maximizing
and outer minimizing selectors
of \eqref{Isaac-discounted} is a saddle point equilibrium for the
stochastic differential game with state dynamics given by \eqref{mainsde}
and with the ergodic criterion under the running payoff function $h$.
\end{remark}

The following corollary, stated here without proof, follows along the
lines of the proof of \cite[Theorem~3.7.12]{AriBorkarGhosh}.

\begin{corollary}
The solution $\varphi^{*}$ has the stochastic representation
\begin{align*}
\varphi^{*}(x) \;&=\;\lim_{r\downarrow0}\;\sup_{v_{1} \in {\mathcal M}_{1}}\;
\inf_{v_{2} \in {\mathcal M}_{2}}\;
\Exp_{x} \left[\int^{\breve{\tau}_{r}}_{0}
\Bigl(h\bigl(X(t),v_{1}(X(t)),v_{2}(X(t))\bigr) - \beta\Bigr)\,\D{t} \right]
\nonumber\\[5pt]
\;&=\;\lim_{r\downarrow0}\; \inf_{v_{2} \in {\mathcal M}_{2}}\;
\sup_{v_{1} \in {\mathcal M}_{1}}\;
\Exp_{x} \left[\int^{\breve{\tau}_{r}}_{0}
\Bigl(h\bigl(X(t),v_{1}(X(t)),v_{2}(X(t))\bigr) - \beta\Bigr)\,\D{t} \right]
\nonumber\\[5pt]
\;&=\;\lim_{r\downarrow0}\;
\Exp_{x} \left[\int^{\breve{\tau}_{r}}_{0}
\Bigl(h\bigl(X(t),v^{*}_{1}(X(t)),v^{*}_{2}(X(t))\bigr) - \beta\Bigr)\,\D{t}\right]\;.
\end{align*}
and is unique in the class of functions that do not grow
faster than $\mathcal{V}$ and vanish at $x=0$.
\end{corollary}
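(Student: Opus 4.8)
The plan is to run the optimal feedback selectors of the Isaac's equation through the It\^o--Dynkin formula on the stopping time $\breve{\tau}_{r}\wedge\tau_{R}$, and then pass to the limits $R\uparrow\infty$ and $r\downarrow0$. Throughout I use that $\varrho=\beta$ is the value of the game and that $\varphi^{*}\in o(\mathcal{V})$, both established in Theorem~\ref{thm2.2}. For $(v_{1},v_{2})\in{\mathcal M}_{1}\times{\mathcal M}_{2}$ write $J_{r}(x,v_{1},v_{2}):=\Exp_{x}\bigl[\int_{0}^{\breve{\tau}_{r}}\bigl(h(X(t),v_{1}(X(t)),v_{2}(X(t)))-\beta\bigr)\,\D{t}\bigr]$; this is finite because (A3$\mspace{2mu}^\prime$) makes \emph{every} stationary Markov pair stable, so $\breve{\tau}_{r}<\infty$ a.s.\ under any control. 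For the third (saddle) representation I would argue that under $(v^{*}_{1},v^{*}_{2})$ the Isaac's equation holds as the exact identity $L\varphi^{*}(x,v^{*}_{1},v^{*}_{2})+h=\varrho$, so It\^o--Dynkin gives $\varphi^{*}(x)=\Exp_{x}\bigl[\int_{0}^{\breve{\tau}_{r}\wedge\tau_{R}}(h-\varrho)\,\D{t}\bigr]+\Exp_{x}\bigl[\varphi^{*}(X(\breve{\tau}_{r}\wedge\tau_{R}))\bigr]$ for each $R$. Letting $R\uparrow\infty$, the boundary term splits over $\{\breve{\tau}_{r}\le\tau_{R}\}$ and its complement; the contribution $\Exp_{x}[\varphi^{*}(X(\tau_{R}))I\{\breve{\tau}_{r}>\tau_{R}\}]$ vanishes by $\varphi^{*}\in o(\mathcal{V})$ and the uniform Lyapunov bound, via \cite[Lemma~3.7.2, p.~125]{AriBorkarGhosh}, leaving $\varphi^{*}(x)=J_{r}(x,v^{*}_{1},v^{*}_{2})+\Exp_{x}[\varphi^{*}(X(\breve{\tau}_{r}))]$. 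Since $\varphi^{*}$ is continuous with $\varphi^{*}(0)=0$, the boundary term is dominated by $\max_{\|y\|=r}|\varphi^{*}(y)|\to0$ as $r\downarrow0$, which yields the third representation.

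For the two variational representations I would exploit the one-sided equations $\varrho=\max_{v_{1}}[L\varphi^{*}(x,v_{1},v^{*}_{2}(x))+h]$ and $\varrho=\min_{v_{2}}[L\varphi^{*}(x,v^{*}_{1}(x),v_{2})+h]$. The first gives $L\varphi^{*}(x,v_{1},v^{*}_{2})+h\le\varrho$ for every $v_{1}\in{\mathcal M}_{1}$, so the same It\^o--Dynkin argument (with the $R\uparrow\infty$ passage justified as above) produces $\varphi^{*}(x)\ge J_{r}(x,v_{1},v^{*}_{2})+\min_{\|y\|=r}\varphi^{*}(y)$; taking the supremum over $v_{1}$, using $J_{r}(x,v_{1},v^{*}_{2})\ge\inf_{v_{2}}J_{r}(x,v_{1},v_{2})$, and letting $r\downarrow0$ gives $\varphi^{*}(x)\ge\limsup_{r\downarrow0}\sup_{v_{1}}\inf_{v_{2}}J_{r}$. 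Symmetrically, the second one-sided equation gives $\varphi^{*}(x)\le J_{r}(x,v^{*}_{1},v_{2})+\max_{\|y\|=r}\varphi^{*}(y)$ for every $v_{2}$, and taking the infimum over $v_{2}$ together with $\inf_{v_{2}}J_{r}(x,v^{*}_{1},v_{2})\le\sup_{v_{1}}\inf_{v_{2}}J_{r}$ yields $\varphi^{*}(x)\le\liminf_{r\downarrow0}\sup_{v_{1}}\inf_{v_{2}}J_{r}$. These two bounds sandwich $\varphi^{*}(x)$, proving the first representation and the existence of the limit; the inf--sup representation follows identically, pairing $v^{*}_{2}$ with $\sup_{v_{1}}$ for the lower bound and $v^{*}_{1}$ with $\inf_{v_{2}}$ for the upper bound.

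For uniqueness, let $(\beta',\hat\varphi)$ be any solution of \eqref{Isaac} with $\hat\varphi\in O(\mathcal{V})$ and $\hat\varphi(0)=0$. Running the ergodic verification argument of Theorem~\ref{thm2.2} that leads to \eqref{ergodicrep1}--\eqref{ergodicrep2}, but now with the selectors associated to $\hat\varphi$, identifies $\beta'$ with the value of the game, so $\beta'=\beta$. Next, the argument that established $\varphi^{*}\in o(\mathcal{V})$ in Theorem~\ref{thm2.2} uses only that the pair solves \eqref{Isaac}, vanishes at the origin, and lies in $O(\mathcal{V})$; repeating it verbatim gives $\hat\varphi\in o(\mathcal{V})$. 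The representation derived above then applies to $\hat\varphi$ word for word, and since its right-hand side depends only on the game data and on $\beta$, we conclude $\hat\varphi=\varphi^{*}$.

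The hard part will be the $R\uparrow\infty$ passage: showing that the exit-boundary contribution $\Exp_{x}[\varphi^{*}(X(\tau_{R}))I\{\breve{\tau}_{r}>\tau_{R}\}]$ vanishes and that the stopped integrals converge. This is precisely where the uniform stability of (A3$\mspace{2mu}^\prime$) and the $o(\mathcal{V})$ growth are essential, and it is channeled entirely through \cite[Lemma~3.7.2, p.~125]{AriBorkarGhosh}. A secondary, but easily handled, point is closing the saddle: only $\sup$--$\inf\le\inf$--$\sup$ is automatic, so one must generate matching upper and lower bounds from the two one-sided equations, which is exactly what pairing with $v^{*}_{1}$ and $v^{*}_{2}$ accomplishes.
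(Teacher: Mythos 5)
Your treatment of the three representation formulas is essentially the argument the paper intends: the paper offers no proof of this corollary, only the remark that it follows along the lines of \cite[Theorem~3.7.12]{AriBorkarGhosh}, and that proof is precisely your scheme --- It\^o--Dynkin up to $\breve{\tau}_{r}\wedge\tau_{R}$ under the two one-sided forms of \eqref{Isaac}, removal of the exit-boundary term $\Exp_{x}\bigl[\varphi^{*}(X(\tau_{R}))\,I\{\breve{\tau}_{r}>\tau_{R}\}\bigr]$ via $\varphi^{*}\in o(\mathcal{V})$ and \cite[Lemma~3.7.2]{AriBorkarGhosh}, and the sandwich between the sup--inf and inf--sup functionals obtained by pairing each player's optimal selector against an arbitrary strategy of the opponent. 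The finiteness of $\Exp_{x}[\breve{\tau}_{r}]$ and of $\Exp_{x}\bigl[\int_{0}^{\breve{\tau}_{r}}h\,\D{t}\bigr]$ under every stationary Markov pair, which you invoke, is indeed what (A3$\mspace{2mu}^\prime$) delivers. That part of the proposal is correct.

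The uniqueness step, however, has a genuine gap. You claim that the argument establishing $\varphi^{*}\in o(\mathcal{V})$ in Theorem~\ref{thm2.2} ``uses only that the pair solves \eqref{Isaac}, vanishes at the origin, and lies in $O(\mathcal{V})$,'' and hence applies verbatim to an arbitrary competitor $\hat{\varphi}$. It does not: that argument combines the lower bound \eqref{appendixeq1} (which does hold for any solution) with the upper bound \eqref{appendixeq6}, and the latter is obtained from the discounted value functions $\psi_{\alpha}$ and the vanishing-discount limit along $\alpha_{n}\downarrow0$ --- it is tied to the particular constructed solution and is simply unavailable for a general $O(\mathcal{V})$ solution $\hat{\varphi}$, which need not arise as such a limit. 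Without $\hat{\varphi}\in o(\mathcal{V})$ you cannot discard the term $\Exp_{x}\bigl[\hat{\varphi}(X(\tau_{R}))\,I\{\breve{\tau}_{r}>\tau_{R}\}\bigr]$ and so cannot run your representation argument ``word for word'' on $\hat{\varphi}$. The standard repair, and the route of \cite[Theorem~3.7.12]{AriBorkarGhosh}, is to compare $\hat{\varphi}$ with $\varphi^{*}$ directly: apply the one-sided inequality \eqref{appendixeq1} to $\hat{\varphi}$ with its own selectors (Fatou makes this a one-sided bound that tolerates $O(\mathcal{V})$ growth), control the resulting payoff functionals by the already-established representation of $\varphi^{*}$, and then use the identification of both average payoffs with $\beta$ to force equality. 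This comparison step is what your write-up skips.
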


\section{Relative Value Iteration}
We consider the following relative value iteration equation.
\begin{equation}\label{relativevalueiteration}
\begin{split}
\frac{\partial \varphi}{\partial t}(t, x) \;&=\;
\min_{v_{2}\in V_{2}}\; \max_{v_{1}\in V_{1}}\;
\bigl[L \varphi(t, x, v_{1}, v_{2}) + h(x,v_{1},v_{2}) \bigr] \, - \,
\varphi(t, 0)\;, \\[5pt]
\varphi(0, x) \, &= \, \varphi_{0}(x) \;,
\end{split}
\end{equation}
where $\varphi_{0} \in C_{\mathcal{V}}(\mathbb{R}^d) \cap C^2(\mathbb{R}^d)\,$.
This can be viewed as a continuous time continuous state space variant of the
relative value iteration algorithm for Markov decision processes \cite{White}.

Convergence of this relative value iteration scheme is obtained through
the study of the value iteration equation which takes the form
\begin{equation}\label{valueiteration}
\begin{split}
\frac{\partial \overline{\varphi}}{\partial t}(t, x) \;&=\;
\min_{v_{2}\in V_{2}}\; \max_{v_{1}\in V_{1}}\;
\bigl[L \overline{\varphi}(t, x, v_{1}, v_{2}) + h(x,v_{1},v_{2}) \bigr]
\, - \, \beta\;, \\[5pt]
\overline{\varphi}(0, x) \; &= \; \varphi_{0}(x) \;,
\end{split}
\end{equation}
where $\beta$ is the value of the average payoff game in Theorem~\ref{thm2.2}.

Under Assumption~(A3), it is straightforward to show that for each $T>0$
there exists a unique solution
$\overline{\varphi}$ in $C_{\mathcal{V}}([0,T] \times \mathbb{R}^d)
\cap C^{1,2}([0,T] \times \mathbb{R}^d)$ to the
p.d.e.\ \eqref{valueiteration}.

First, we prove the following important estimate which is crucial for the
proof of convergence.

\begin{lemma}
Assume \textup{(A1)--(A3)}.
Then for each $T > 0\,$,
the p.d.e.\ in \eqref{relativevalueiteration} has a unique solution
$\varphi \in C_{\mathcal{V}}([0,T] \times \mathbb{R}^d)
\cap C^{1,2}([0,T] \times \mathbb{R}^d)\,$.
\end{lemma}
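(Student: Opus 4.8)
The plan is to reduce the relative value iteration \eqref{relativevalueiteration} to the value iteration \eqref{valueiteration}, whose well-posedness in the stated class under (A3) has already been recorded, by exploiting a time-dependent vertical shift. The key structural observation is that the operator $L$ carries no zeroth-order term: since $L f(x,v_1,v_2) = b(x,v_1,v_2)\cdot\nabla f(x) + \frac12\mathrm{tr}\bigl(a(x)\nabla^2 f(x)\bigr)$, it annihilates any additive function of $t$ alone. Writing $M\varphi(t,x)\df\min_{v_2\in V_2}\max_{v_1\in V_1}\bigl[L\varphi(t,x,v_1,v_2)+h(x,v_1,v_2)\bigr]$ for brevity, we therefore have the shift identity $M(\varphi+g)(t,x)=M\varphi(t,x)$ for every $g=g(t)$ depending only on time. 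This is precisely the algebraic fact that lets one pass between the two schemes, the only difference between them being the subtraction of $\beta$ versus the subtraction of $\varphi(t,0)$.

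For existence, let $\overline\varphi\in C_{\mathcal V}([0,T]\times\mathbb R^d)\cap C^{1,2}([0,T]\times\mathbb R^d)$ be the solution of \eqref{valueiteration}; in particular $t\mapsto\overline\varphi(t,0)$ is $C^1$ on $[0,T]$. I would solve the scalar linear ODE $g'(t)+g(t)=\beta-\overline\varphi(t,0)$ with $g(0)=0$, whose explicit solution $g(t)=\int_0^t\E^{-(t-s)}\bigl(\beta-\overline\varphi(s,0)\bigr)\,\D s$ is $C^1$ and bounded on $[0,T]$. Setting $\varphi\df\overline\varphi+g$, a direct substitution using $\partial_t\overline\varphi=M\overline\varphi-\beta$, the shift identity $M\varphi=M\overline\varphi$, and $\varphi(t,0)=\overline\varphi(t,0)+g(t)$ shows that $\partial_t\varphi=M\varphi-\varphi(t,0)$, while $\varphi(0,\cdot)=\overline\varphi(0,\cdot)=\varphi_0$. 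Because $g$ is a bounded function of $t$ alone and $\mathcal V\ge1$, adding $g$ disturbs neither the $\mathcal V$-weighted bound nor the $C^{1,2}$ regularity, so $\varphi$ inherits membership in the required class.

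For uniqueness, suppose $\varphi$ is any solution of \eqref{relativevalueiteration} in that class, and set $\ell(t)\df\int_0^t\bigl(\beta-\varphi(s,0)\bigr)\,\D s$ and $\overline\psi\df\varphi-\ell$. The shift identity gives $M\overline\psi=M\varphi$, and $\partial_t\overline\psi=\partial_t\varphi-\ell'=M\varphi-\varphi(t,0)-\bigl(\beta-\varphi(t,0)\bigr)=M\overline\psi-\beta$ with $\overline\psi(0,\cdot)=\varphi_0$; hence $\overline\psi$ solves \eqref{valueiteration}, and the uniqueness already available for that equation forces $\overline\psi=\overline\varphi$, i.e. $\varphi(t,x)=\overline\varphi(t,x)+\ell(t)$. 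Evaluating at $x=0$ and differentiating yields a closed first-order linear ODE for $m(t)\df\varphi(t,0)$, namely $m'(t)=\partial_t\overline\varphi(t,0)+\beta-m(t)$ with the fixed initial value $m(0)=\varphi_0(0)$; since this ODE has a unique solution, $m$, hence $\ell$, hence $\varphi=\overline\varphi+\ell$, is uniquely determined. I expect the only genuine work to be the verification that the vertical shift preserves membership in $C_{\mathcal V}([0,T]\times\mathbb R^d)\cap C^{1,2}([0,T]\times\mathbb R^d)$ together with the continuity and differentiability in $t$ of $\overline\varphi(t,0)$ that make the scalar ODEs well-posed; the existence and uniqueness for \eqref{valueiteration}, which is a standard quasilinear parabolic Isaac's equation with the constant forcing $-\beta$, is the ingredient I borrow rather than reprove.
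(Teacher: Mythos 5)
Your argument is correct as far as it goes, but it is a genuinely different route from the paper's. The paper proves the lemma ``from scratch'' for the nonlocal equation \eqref{relativevalueiteration}: it mimics the construction in \cite[Lemma~4.1]{AriBorkar}, truncating the running payoff at level $n$ and using the Lyapunov bound \eqref{estimate-geometric} together with the estimate $\int_0^t \Exp_x[h^n]\,\D{s}\le \tfrac{k_2}{2k_1}(k_0t+\mathcal{V}(x))$ from \cite[Lemma~2.5.5]{AriBorkarGhosh} to pass to the limit and obtain the $\mathcal{V}$-weighted bound. You instead exploit that $L$ has no zeroth-order term to convert \eqref{relativevalueiteration} into \eqref{valueiteration} by the shift $g(t)=\int_0^t\E^{s-t}(\beta-\overline{\varphi}(s,0))\,\D{s}$ --- which is precisely the identity recorded later as Lemma~\ref{lem3.3} --- and then invoke the well-posedness of \eqref{valueiteration} that the paper asserts, without proof, just before the lemma. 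The computations (the shift identity, the scalar ODEs for $g$ and $\ell$, the preservation of the class $C_{\mathcal{V}}\cap C^{1,2}$ since $\mathcal{V}\ge1$) all check out, and there is no circularity. What your route buys is brevity and a clean separation of the nonlocal feature from the parabolic one; what it costs is that the entire analytic content --- existence \emph{and uniqueness} for the parabolic Isaac's equation \eqref{valueiteration} with unbounded data in the weighted class --- is relocated into the ``straightforward'' claim you borrow, which is exactly the work the paper's citation to \cite[Lemma~4.1]{AriBorkar} and the truncation estimate are meant to carry out. In a self-contained writeup you would still owe that argument (or at least the comparison-principle/Gronwall step in the $\|\cdot\|_{\mathcal{V}}$ norm that gives uniqueness for \eqref{valueiteration}), so your proof should be read as a correct reduction rather than a replacement for the paper's estimates.
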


\begin{proof}
The proof follows by mimicking the arguments in \cite[Lemma~4.1]{AriBorkar},
using the following estimate
\begin{equation}\label{estimate-geometric}
\Exp_{x} \bigl[\mathcal{V}(X(t)) \bigr] \;\le\; \frac{k_{0}}{2 k_{1}}
+ \mathcal{V}(x) \E^{ -2 k_{1}t} \,,
\end{equation}
where $X$ is the solution to \eqref{mainsde} corresponding
to any admissible controls
$v_{1}$ and $v_{2}$ and initial condition $x \in \mathbb{R}^d$.
The estimate for $\varphi$ follows from the
arguments in \cite[Lemma~2.5.5, pp.~63--64]{AriBorkarGhosh}, noting that
for all $v_{i}\in \mathcal{U}_{i}$, $i=1,2\,$, we have
\begin{align*}
\int^t_{0}
\Exp_{x}\bigl[h^n\bigl(X(s), v_{1}(s), v_{2}(s)\bigr)\bigr] \,\D{s} & \;\le\;
k_{2}\int^t_{0} \Exp_{x}\bigl[\mathcal{V}(X(s))\bigr] \,\D{s}\\[5pt]
& \;\le\; \frac{k_{2}}{2k_{1}} \bigl(k_{0} t + \mathcal{V}(x)\bigr) \,,
\end{align*}
where $h^n (x, v_{1}, v_{2}) \,\df\,n \wedge h(x,v_{1},v_{2})$ is the
truncation of $h$ at $n\ge0$.
\end{proof}

Next, we turn our attention to the p.d.e.\ in \eqref{valueiteration}.
It is straightforward to show that the solution $\overline{\varphi}$ to
\eqref{valueiteration} also satisfies
\begin{equation}\label{valueiterationbis}
\begin{split}
\frac{\partial \overline{\varphi}}{\partial t}(t, x) \;&=\;
\max_{v_{1}\in V_{1}}\;\min_{v_{2}\in V_{2}}\; 
\bigl[L\, \overline{\varphi}(t, x, v_{1}, v_{2}) + h(x,v_{1},v_{2}) \bigr]
\, - \, \beta\;, \\[5pt]
\overline{\varphi}(0, x) \, &= \, \varphi_{0}(x) \;,
\end{split}
\end{equation}

\begin{definition}
We let $\Bar{v}_{i}:\mathbb{R}_{+}\times\mathbb{R}^{d}\to V_{i}$ for $i=1,2$
be an outer maximizing and an outer minimizing selector of
\eqref{valueiterationbis} and \eqref{valueiteration}, respectively.
For each $t\ge0$ we define the (nonstationary) Markov control
\begin{equation*}
\Bar{v}_{i}^{t}\df\bigl\{\Bar{v}_{i}^{t}(s,\cdot\,)
\;=\;\Bar{v}_{i}(t-s,\cdot\,)\,,\; s\in[0,t]\bigr\}\,.
\end{equation*}
We also let
$\Prob_{x}^{v_{1},v_{2}}$ denote the probability measure and
$\Exp_{x}^{v_{1},v_{2}}$ the expectation operator on the canonical space of the
process under the control $v_{i}\in \mathcal{U}_{i}$, $i=1,2\,$, conditioned on the
process $X$ starting from $x\in\mathbb{R}^{d}$ at $t=0$.
\end{definition}

It is straightforward to show that
the solution $\overline{\varphi}$ of \eqref{valueiteration} satisfies,
\begin{align}\label{visol}
\overline{\varphi}(t,x) \;&=\;
\Exp^{\Bar{v}^{t}_{1},\Bar{v}^{t}_{2}}_{x} \biggl[\int_{0}^{t-s}
\Bigl(h\bigl(X(\tau),\Bar{v}_{1}\bigl(t-\tau, X(\tau)\bigr),
\Bar{v}_{2}\bigl(t-\tau, X(\tau)\bigr)\bigr) - \beta\Bigr)\,\D{\tau}
\nonumber\\
&\mspace{400mu} + \overline{\varphi}\bigl(s,X(t-s)\bigr)\biggr]
\nonumber\\[5pt]
&=\;\inf_{v_{2}\in\, \mathcal{U}_{2}}\; \sup_{v_{1} \in\, \mathcal{U}_{1}}\;
\Exp_{x}^{v_{1},v_{2}} \biggl[\int_{0}^{t-s}
\Bigl(h\bigl(X(\tau),v_{1}(\tau),v_{2}(\tau)\bigr) - \beta\Bigr)\,\D{\tau}
 + \overline{\varphi}\bigl(s,X(t-s)\bigr)\biggr]
\nonumber\\[5pt]
&=\;\sup_{v_{1} \in\, \mathcal{U}_{1}}\;\inf_{v_{2}\in\, \mathcal{U}_{2}}\;
\Exp_{x}^{v_{1},v_{2}} \biggl[\int_{0}^{t-s}
\Bigl(h\bigl(X(\tau),v_{1}(\tau),v_{2}(\tau)\bigr) - \beta\Bigr)\,\D{\tau}
\nonumber\\
&\mspace{400mu} + \overline{\varphi}\bigl(s,X(t-s)\bigr)\biggr]
\end{align}
for all $t\ge s\ge0$.

\begin{lemma}\label{lem3.2}
Assume \textup{(A1)--(A3)}.
For each $\varphi_{0} \in C_{\mathcal{V}}(\mathbb{R}^d) \cap C^2(\mathbb{R}^d)$,
the solution $\overline{\varphi}$ of the p.d.e. \eqref{valueiteration} satisfies
the following estimate
\begin{equation*}
\bigl| \overline{\varphi}(t,x)\,-\,\varphi^*(x)\bigr| \;\le\;
\|\overline{\varphi}(s,\cdot\,)\,-\,\varphi^*\|_{\mathcal{V}}\,
\left( \frac{k_{0}}{2 k_{1}} + \mathcal{V}(x)\, \E^{ -2 k_{1}(t-s)}\right)
\quad \forall x \in \mathbb{R}^d\,,
\end{equation*}
and for all $t\ge s\ge0$,
where $\varphi^*$ is as in Theorem~\ref{thm2.2}.
\end{lemma}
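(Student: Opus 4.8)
The plan is to recognize $\varphi^*$ from Theorem~\ref{thm2.2} as a time-independent solution of the value iteration \eqref{valueiteration} --- indeed, substituting $\overline{\varphi}(t,x)\equiv\varphi^*(x)$ makes both sides vanish by \eqref{Isaac} --- and then to compare the genuine solution $\overline{\varphi}$ against it by means of the saddle-point representation \eqref{visol}, feeding in the \emph{stationary} selectors furnished by the elliptic Isaac's equation. Concretely, let $v_1^*\in{\mathcal M}_1$ be an outer maximizing selector and $v_2^*\in{\mathcal M}_2$ an outer minimizing selector for \eqref{Isaac} corresponding to $\varphi^*$. The minimax structure of \eqref{Isaac} then gives the pointwise inequalities
$$
L\varphi^*(x,v_1,v_2^*(x))+h(x,v_1,v_2^*(x))\;\le\;\beta\;\le\;L\varphi^*(x,v_1^*(x),v_2)+h(x,v_1^*(x),v_2)
$$
for all $v_1\in V_1$, $v_2\in V_2$ and $x\in\mathbb{R}^d$, which drive the whole argument.

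For the upper bound I would use the middle ($\inf\sup$) representation in \eqref{visol}: since $v_2^*\in{\mathcal M}_2\subset\mathcal{U}_2$ is admissible, inserting it yields $\overline{\varphi}(t,x)\le\sup_{v_1}\Exp_x^{v_1,v_2^*}[\,\cdot\,]$. Applying the It\^o--Dynkin formula to $\varphi^*(X(\cdot))$ under $(v_1,v_2^*)$ on $[0,t-s]$ and using the left inequality above converts the running term into a telescoping one,
$$
\Exp_x^{v_1,v_2^*}\biggl[\int_0^{t-s}\bigl(h\bigl(X(\tau),v_1(\tau),v_2^*(X(\tau))\bigr)-\beta\bigr)\,\D\tau\biggr]\;\le\;\varphi^*(x)-\Exp_x^{v_1,v_2^*}\bigl[\varphi^*(X(t-s))\bigr].
$$
The terminal value $\varphi^*(X(t-s))$ then cancels against the one hidden in $\overline{\varphi}(s,X(t-s))$, leaving
$$
\overline{\varphi}(t,x)-\varphi^*(x)\;\le\;\sup_{v_1}\Exp_x^{v_1,v_2^*}\bigl[\overline{\varphi}(s,X(t-s))-\varphi^*(X(t-s))\bigr]\;\le\;\|\overline{\varphi}(s,\cdot\,)-\varphi^*\|_{\mathcal{V}}\,\sup_{v_1}\Exp_x^{v_1,v_2^*}\bigl[\mathcal{V}(X(t-s))\bigr].
$$
The geometric moment bound \eqref{estimate-geometric}, which holds under any admissible pair, controls the last supremum by $\frac{k_0}{2k_1}+\mathcal{V}(x)\E^{-2k_1(t-s)}$, giving one half of the claim. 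The lower bound is symmetric: one uses the last ($\sup\inf$) line of \eqref{visol}, inserts $v_1^*$, applies It\^o--Dynkin with the right inequality above, and bounds $\overline{\varphi}(s,\cdot\,)-\varphi^*\ge-\|\overline{\varphi}(s,\cdot\,)-\varphi^*\|_{\mathcal{V}}\,\mathcal{V}$. Combining the two inequalities yields the two-sided estimate.

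The step requiring the most care is the rigorous It\^o--Dynkin identity over the fixed horizon $[0,t-s]$, since there is no discount factor to force integrability. I would localize with the exit times $\tau_R$, discard the martingale increment in expectation, and then show that the boundary contribution $\Exp_x^{v_1,v_2^*}[\varphi^*(X(\tau_R))\,I\{\tau_R\le t-s\}]$ vanishes as $R\to\infty$. This is precisely where the growth control $\varphi^*\in o(\mathcal{V})$ established in Theorem~\ref{thm2.2}, the bound $\max h\le k_2\mathcal{V}$ from \textup{(A3)}, and the uniform-in-control moment estimate \eqref{estimate-geometric} must be combined, exactly as in the localization arguments already carried out in the proof of Theorem~\ref{thm2.2}.
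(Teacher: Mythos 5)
Your proposal is correct and follows essentially the same route as the paper: both arguments compare $\overline{\varphi}$ with the stationary solution $\varphi^{*}$ via the representation \eqref{visol}, fix the outer minimizing (resp.\ maximizing) selector $v_{2}^{*}$ (resp.\ $v_{1}^{*}$) of the elliptic Isaac's equation to obtain the one-sided comparison inequalities \eqref{VIcompA}--\eqref{VIcompB}, and then invoke the moment bound \eqref{estimate-geometric}. The only difference is presentational --- the paper states the comparison with the specific nonstationary selectors $\Bar{v}^{t}_{1},\Bar{v}^{t}_{2}$ and leaves the It\^o--Dynkin localization implicit, whereas you take the supremum over admissible $v_{1}$ and spell out the localization via $\tau_{R}$, which is a legitimate filling-in of the same argument.
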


\begin{proof}
Let $v_{1}^*\in{\mathcal M}_{1}$ and $v_{2}^*\in{\mathcal M}_{2}$
be an outer maximizing and outer minimizing selector of \eqref{Isaac},
respectively.
By \eqref{visol} we obtain
\begin{align}
\overline{\varphi}(t,x) - \varphi^*(x)
&\;\le\; \Exp_{x}^{\Bar{v}^{t}_{1},v_{2}^*}
\bigl[\overline{\varphi}\bigl(s,X(t-s)\bigr)
- \varphi^*\bigl(X(t-s)\bigr)\bigr]\label{VIcompA}
\intertext{and}
\varphi^*(x) - \overline{\varphi}(t,x)
&\;\le\; \Exp_{x}^{v_{1}^{*},\Bar{v}^{t}_{2}}\bigl[\varphi^*\bigl(X(t-s)\bigr)
-\overline{\varphi}\bigl(s,X(t-s)\bigr)\bigr]\label{VIcompB}
\end{align}
for all $t\ge s\ge0$.
By \eqref{VIcompA}--\eqref{VIcompB}
we obtain
\begin{equation*}
\bigl| \overline{\varphi}(t,x)\,-\,\varphi^*(x)\bigr| \;\le\;
\sup_{(v_{1},v_{2}) \,\in\, \mathcal{U}_{1}\times\mathcal{U}_{2}}\;
\Exp_{x}^{v_{1},v_{2}}\Bigl[\bigl|\overline{\varphi}\bigl(s,X(t-s)\bigr)
- \varphi^*\bigl(X(t-s)\bigr)\bigr|\Bigr]\,,
\end{equation*}
and an application of \eqref{estimate-geometric} completes the proof. 
\end{proof}

Arguing as in the proof of \cite[Lemma~4.4]{AriBorkar}, we can show the following:

\begin{lemma}\label{lem3.3}
Assume \textup{(A1)--(A3)}.
If $\overline{\varphi}(0, x) = \varphi(0, x) = \varphi_{0}(x)$ for some
$\varphi_{0} \in C_{\mathcal{V}}(\mathbb{R}^d) \cap C^2(\mathbb{R}^d)$, then
\begin{equation*}
 \varphi(t,x) - \varphi(t, 0) \;=\;
 \overline{\varphi}(t, x) - \overline{\varphi}(t, 0)\;,
\end{equation*}
and
\begin{equation*}
 \varphi(t, x) \;=\; \overline{\varphi}(t,x) - \E^{-t}
\int^t_{0}\E^s\,\overline{\varphi}(s, 0) \,\D{s} + \beta (1- \E^{-t})
\end{equation*}
for all $x \in \mathbb{R}^d$ and $t \ge 0$.
\end{lemma}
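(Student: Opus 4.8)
The plan is to exploit the fact that the parabolic Isaac's equations \eqref{relativevalueiteration} and \eqref{valueiteration} differ only in a spatially constant forcing term: the former subtracts the value at the origin $\varphi(t,0)$, the latter subtracts the constant $\beta$. The minimax operator $\min_{v_{2}\in V_{2}}\max_{v_{1}\in V_{1}}\bigl[L\,g(x,v_{1},v_{2})+h(x,v_{1},v_{2})\bigr]$ is invariant under adding to $g$ a function of $t$ alone, because $L$ differentiates only in $x$. I therefore expect $\varphi$ and $\overline{\varphi}$ to differ by a purely time-dependent correction, and I would posit the ansatz $\varphi(t,x)=\overline{\varphi}(t,x)+f(t)$ for some $f\in C^{1}([0,T])$, the common initial datum $\varphi_{0}$ forcing $f(0)=0$.

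Substituting this ansatz into \eqref{relativevalueiteration} and using $L\bigl(\overline{\varphi}+f(t)\bigr)=L\overline{\varphi}$ together with \eqref{valueiteration}, the minimax term collapses to $\partial_{t}\overline{\varphi}+\beta$, while the subtracted term becomes $\overline{\varphi}(t,0)+f(t)$. Matching the result against $\partial_{t}\varphi=\partial_{t}\overline{\varphi}+f'(t)$ leaves the scalar linear ODE
\begin{equation*}
f'(t)+f(t)\;=\;\beta-\overline{\varphi}(t,0)\,,\qquad f(0)=0\,.
\end{equation*}
Since $s\mapsto\overline{\varphi}(s,0)$ is continuous on $[0,T]$, solving with integrating factor $\E^{t}$ yields $f(t)=\beta(1-\E^{-t})-\E^{-t}\int_{0}^{t}\E^{s}\,\overline{\varphi}(s,0)\,\D{s}$, which is exactly the correction in the second asserted identity.

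It then remains to verify that $\Tilde{\varphi}\df\overline{\varphi}+f$ is a bona fide solution of \eqref{relativevalueiteration} in the correct function class and to invoke uniqueness. As $f$ is bounded on $[0,T]$ and $\mathcal{V}\ge1$, adding $f(t)$ preserves membership in $C_{\mathcal{V}}([0,T]\times\mathbb{R}^{d})\cap C^{1,2}([0,T]\times\mathbb{R}^{d})$; thus $\Tilde{\varphi}$ lies in this space, meets the initial condition, and by the uniqueness established in the first lemma of this section we conclude $\varphi=\Tilde{\varphi}$, which is the second identity. The first identity follows at once: being a function of $t$ only, $f$ cancels in $\varphi(t,x)-\varphi(t,0)$, giving $\varphi(t,x)-\varphi(t,0)=\overline{\varphi}(t,x)-\overline{\varphi}(t,0)$.

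The only genuinely delicate point I anticipate is the apparent nonlocality introduced by the term $\varphi(t,0)$ in \eqref{relativevalueiteration}, which a priori couples the evolution to the solution's own value at the origin. The substance of the argument is that this coupling is benign: the substitution converts it into the self-contained ODE above, whose right-hand side $\beta-\overline{\varphi}(t,0)$ depends only on the already-constructed solution $\overline{\varphi}$ of \eqref{valueiteration}. Hence $f$, and therefore $\varphi$, is determined explicitly, with no fixed-point difficulty to resolve.
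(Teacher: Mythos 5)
Your proof is correct and is essentially the argument the paper has in mind: the paper gives no proof of Lemma~\ref{lem3.3}, deferring to \cite[Lemma~4.4]{AriBorkar}, and the proof there is precisely this device of writing $\varphi=\overline{\varphi}+f(t)$, using that $L$ annihilates functions of $t$ alone so that the nonlocal term $\varphi(t,0)$ reduces to the scalar ODE $f'+f=\beta-\overline{\varphi}(t,0)$, $f(0)=0$, and then invoking the uniqueness of solutions to \eqref{relativevalueiteration} in $C_{\mathcal{V}}([0,T]\times\mathbb{R}^d)\cap C^{1,2}([0,T]\times\mathbb{R}^d)$ from the first lemma of Section~3. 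Your verification that the additive shift $f$ preserves membership in this class (using $\mathcal{V}\ge1$ and continuity of $s\mapsto\overline{\varphi}(s,0)$) is the right point to check, and the first identity indeed follows immediately since $f$ is independent of $x$.
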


Convergence of the relative value iteration is asserted in the following
theorem.

\begin{theorem}\label{thm3.1}
Assume \textup{(A1)--(A3)}.
For each $\varphi_{0} \in C_{\mathcal{V}}(\mathbb{R}^d) \cap C^2(\mathbb{R}^d)$,
$\overline\varphi(t, x)$ converges to $\varphi^*(x) + \text{constant}$ and
$\varphi(t, x)$ converges to $\varphi^*(x) + \beta$ as $t \to \infty$.
\end{theorem}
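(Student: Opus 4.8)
The plan is to deduce the statement for $\varphi$ from the one for $\overline\varphi$ and to concentrate the work on showing that $\overline\varphi(t,\cdot)$ converges to $\varphi^{*}$ up to an additive constant. Granting the latter, say $\overline\varphi(t,x)\to\varphi^{*}(x)+c$ for every $x$, we have $\overline\varphi(t,0)\to c$ (since $\varphi^{*}(0)=0$); as $t\mapsto\overline\varphi(t,0)$ is bounded (Step~1 below), the Abel average $\E^{-t}\int_{0}^{t}\E^{s}\,\overline\varphi(s,0)\,\D{s}$ also tends to $c$, while $\beta(1-\E^{-t})\to\beta$. Substituting into the identity of Lemma~\ref{lem3.3} gives $\varphi(t,x)\to(\varphi^{*}(x)+c)-c+\beta=\varphi^{*}(x)+\beta$, which is the assertion for $\varphi$.

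So write $w(t,x):=\overline\varphi(t,x)-\varphi^{*}(x)$. Two ingredients are needed. (Step~1) Lemma~\ref{lem3.2} with $s=0$ and $\mathcal V\ge1$ yields $\sup_{t\ge0}\|w(t,\cdot)\|_{\mathcal V}\le\bigl(1+\tfrac{k_{0}}{2k_{1}}\bigr)\|\varphi_{0}-\varphi^{*}\|_{\mathcal V}<\infty$; in particular $w(t,\cdot)$ is bounded on every ball, uniformly in $t$, and $\overline\varphi(t,0)$ is bounded. (Step~2) Taking $v_{1}^{*},v_{2}^{*}$ to be the selectors of \eqref{Isaac} and $\bar v_{i}^{t}$ those of \eqref{valueiteration}--\eqref{valueiterationbis}, inequalities \eqref{VIcompA}--\eqref{VIcompB} read, after cancelling $\varphi^{*}$,
\[
\Exp_{x}^{v_{1}^{*},\bar v_{2}^{t}}\bigl[w(s,X(t-s))\bigr]\;\le\;w(t,x)\;\le\;\Exp_{x}^{\bar v_{1}^{t},v_{2}^{*}}\bigl[w(s,X(t-s))\bigr]
\]
for all $t\ge s\ge0$ and $x\in\mathbb{R}^{d}$. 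This two-sided stochastic sandwich is the workhorse, rather than Lemma~\ref{lem3.2} itself.

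For the oscillation I would argue by compactness. Fix $t_{n}\uparrow\infty$ and set $u_{n}(\tau,x):=\overline\varphi(t_{n}+\tau,x)$. Since \eqref{valueiteration} is autonomous, each $u_{n}$ solves it; by Step~1 the $u_{n}$ are locally bounded uniformly in $n$, so interior parabolic $W^{2,p}$/Schauder estimates make $\{u_{n}\}$ precompact in $C^{1,2}_{loc}$. Along a subsequence $u_{n}\to u_{\infty}$, an entire solution of \eqref{valueiteration} with $\sup_{\tau}\|u_{\infty}(\tau,\cdot)-\varphi^{*}\|_{\mathcal V}<\infty$, and the selectors of $u_{n}$ subconverge to selectors $v_{i}^{*}$ of \eqref{Isaac}. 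Hence both controls in the sandwich collapse to $(v_{1}^{*},v_{2}^{*})$ in the limit, giving the exact identity $w_{\infty}(\tau,x)=\Exp_{x}^{v_{1}^{*},v_{2}^{*}}[w_{\infty}(\sigma,X(\tau-\sigma))]$ for $w_{\infty}:=u_{\infty}-\varphi^{*}$. Letting $\sigma\to-\infty$ and using that under (A3) the diffusion driven by $(v_{1}^{*},v_{2}^{*})$ is geometrically ergodic—so the law of $X(\tau-\sigma)$ converges to $\eta[v_{1}^{*},v_{2}^{*}]$ in $\mathcal V$-norm, killing the dependence on $x$—forces $w_{\infty}(\tau,\cdot)$ to be spatially constant, and \eqref{valueiteration} then makes it constant in $\tau$. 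Thus $u_{\infty}(\tau,\cdot)-u_{\infty}(\tau,0)\equiv\varphi^{*}$ (this also follows from the uniqueness in the Corollary after Theorem~\ref{thm2.2}), and since $t_{n}$ was arbitrary, $\overline\varphi(t,x)-\overline\varphi(t,0)\to\varphi^{*}(x)$ locally uniformly.

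It remains to pin the additive constant, i.e.\ to show $\overline\varphi(t,0)$ converges. Writing $w(s,y)=\overline\varphi(s,0)+r(s,y)$ with $r(s,y):=\overline\varphi(s,y)-\overline\varphi(s,0)-\varphi^{*}(y)\to0$ locally uniformly and $|r(s,\cdot)|\le C\mathcal V$, the sandwich at $x=0$ gives $|\overline\varphi(t,0)-\overline\varphi(s,0)|\le\sup_{(v_{1},v_{2})}\Exp_{0}^{v_{1},v_{2}}[\,|r(s,X(t-s))|\,]$; splitting over $B_{R}$ and its complement, the first part is controlled by $\sup_{B_{R}}|r(s,\cdot)|\to0$ and the second by $C\,\Exp_{0}[\mathcal V(X(t-s))\,I\{X(t-s)\notin B_{R}\}]$, small for $R$ large by \eqref{estimate-geometric} and the inf-compactness of $\mathcal V$; hence $\overline\varphi(t,0)$ is Cauchy and converges to some $c$, and with the previous paragraph $\overline\varphi(t,x)\to\varphi^{*}(x)+c$. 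The technical heart is the two places where merely local convergence must be upgraded to convergence of $\mathcal V$-weighted expectations against the diffusion's law—passing $\sigma\to-\infty$ in the limiting identity and the tail estimate for $r$—both resting on the exponential drift \eqref{estimate-geometric} and the uniform integrability of $\mathcal V(X(\cdot))$ afforded by (A3), which is exactly the feature that (A3$'$) of Section~2 lacks.
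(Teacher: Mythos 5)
Your overall skeleton (parabolic compactness to extract subsequential limits, the comparison inequalities \eqref{VIcompA}--\eqref{VIcompB} as the workhorse, constancy of the limit profile, then Lemma~\ref{lem3.3} and an Abel-mean computation for $\varphi$) matches the paper, and your first, second and last paragraphs are essentially sound (the tail estimate $\Exp_{0}\bigl[|r(s,X)|\,\mathrm{I}_{B_{R}^{c}}\bigr]$ needs the slightly sharper bound $\|\varPhi_{t}\mathrm{I}_{B_{R}^{c}}\|_{\mathcal V}\to0$ as $\min\{t,R\}\to\infty$, exactly as in \eqref{E.03}--\eqref{E.04}, rather than uniform integrability of $\mathcal V(X)$, but that is available from Lemma~\ref{lem3.2}). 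The genuine gap is the core of your third paragraph: the claim that ``the selectors of $u_{n}$ subconverge to selectors $v_{i}^{*}$ of \eqref{Isaac}, hence both controls in the sandwich collapse to $(v_{1}^{*},v_{2}^{*})$,'' yielding the exact identity $w_{\infty}(\tau,x)=\Exp_{x}^{v_{1}^{*},v_{2}^{*}}[w_{\infty}(\sigma,X(\tau-\sigma))]$. Measurable selectors are neither unique nor continuous in the data, so they need not converge at all; even granting subconvergence of the controls in a topology strong enough to pass to the limit inside $\Exp_{x}^{\bar v_{1}^{t_{n}+\tau},v_{2}^{*}}[\,\cdot\,]$, the limit controls are optimal selectors for the Hamiltonian of $u_{\infty}$, not of $\varphi^{*}$ --- identifying them with $v_{i}^{*}$ presupposes $u_{\infty}=\varphi^{*}+c$, which is what you are trying to prove. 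What survives in the limit is only a two-sided inequality with \emph{different} control pairs $(v_{1}^{*},\tilde v_{2})$ and $(\tilde v_{1},v_{2}^{*})$ on the two sides; letting $\sigma\to-\infty$ then bounds $w_{\infty}(\tau,\cdot)$ between averages against two different invariant measures, which does not force spatial constancy. Moreover, under a pair such as $(v_{1}^{*},\tilde v_{2})$ with $\tilde v_{2}$ a limit of nonstationary controls, you do not have a fixed stationary Markov pair to which $\mathcal V$-geometric ergodicity applies.

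The paper avoids this entirely: it never passes to the limit in the controls. Instead it works with the one-sided inequality \eqref{VIcompA} alone and a uniform minorization over a fixed ball $D$ (the hitting-probability and Harnack estimates \eqref{E.001}--\eqref{E.004}, valid uniformly over all admissible controls), which shows that $\Exp_{x}\bigl[\varPhi_{t_{n}}(X(t_{n+1}-t_{n}))\bigr]$ falls below $\sup f$ by a quantified amount $\tfrac12\Gamma\bigl(\Bar f-\min_{D}f\bigr)$ unless $f$ is already constant on $D$; taking suprema in \eqref{E.05} forces $\Gamma=0$ and hence constancy, and a second application of \eqref{VIcompA} across interlaced subsequences gives uniqueness of the constant. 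To repair your argument you would need either this kind of quantitative strict-improvement step, or a genuine compactness-and-identification argument for the nonstationary Markov controls $\bar v_{i}^{t}$ together with a uniqueness theorem for bounded entire solutions of \eqref{valueiteration}; as written, the step where the sandwich ``collapses'' is not justified.
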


\begin{proof}
By Lemma~\ref{lem3.2}
the map $x\mapsto\overline{\varphi}(t,x)$ is locally bounded, uniformly in $t\ge0$.
It then follows that $\bigl\{\frac{\partial^{2}\overline{\varphi}(t,x)}
{\partial{x_{i}}\partial{x_{j}}}\,,~t\ge1\bigr\}$ are locally H\"older equicontinuous
(see \cite[Theorem~5.1]{Lady}).
Therefore the $\omega$-limit set $\omega(\varphi_{0})$ of any initial condition
$\varphi_{0} \in C_{\mathcal{V}}(\mathbb{R}^d) \cap C^2(\mathbb{R}^d)$
is a nonempty compact subset of
$C_{\mathcal{V}}(\mathbb{R}^d) \cap C_{loc}^2(\mathbb{R}^d)$.

To simplify the notation we define
\begin{equation*}
\varPhi_{t}(x) \;\df\; \overline{\varphi}(t,x) - \varphi^{*}(x)\,,\quad (t,x)\in
\mathbb{R}_{+}\times\mathbb{R}^{d}\;.
\end{equation*}
By Lemma~\ref{lem3.2}, if $f\in\omega(\varphi_{0})$
then
\begin{equation*}
\limsup_{t\to\infty}\;|\varPhi_{t}(x)|
 \;\le\; \frac{k_{0}}{2 k_{1}}\, \|\varphi_{0}\,-\,\varphi^*\|_{\mathcal{V}}\;.
\end{equation*}
Let $\{t_{n}\,\ n\in\mathbb{N}\}\subset\mathbb{R}_{+}$
be any increasing sequence such that $t_{n}\uparrow\infty$ and
\begin{equation*}
\varPhi_{t_{n}}\to f\,\in\,C_{\mathcal{V}}(\mathbb{R}^d)
\cap C^2(\mathbb{R}^d)\qquad\text{as~}n\to\infty\;.
\end{equation*}
Dropping to a subsequence we assume that $t_{n+1}-t_{n}\uparrow\infty$
as $n\to\infty$.
By construction $f+\varphi^{*}\in\omega(\varphi_{0})$.

We first show that $f$ is a constant.
We define
\begin{equation*}
\Bar{f} \;\df\; \sup_{x\in\mathbb{R}^{d}}\;f(x)\;,
\end{equation*}
and a subsequence $\{k_{n}\}\subset \mathbb{N}$ by
\begin{equation}\label{E.00}
k_{n} \;\df\; \sup\;\Bigl\{k\in\mathbb{N} :
\sup_{x\in B_{k}}\;\varPhi_{t_{n}}(x)\;\le\;\Bar{f}+\frac{1}{k}\Bigr\}\;.
\end{equation}
Since $\varPhi_{t_{n}}$ converges to $f$ uniformly on compact sets as
$n\to\infty$, it follows that $k_{n}\uparrow \infty$ as $n\to\infty$.
Let $D$ be any fixed closed ball centered at the origin such that
$$
\inf_{x\in D^{c}}\;\mathcal{V}(x) \;\ge\;
\frac{2k_{0}\,\|\varphi_{0}\,-\,\varphi^*\|_{\mathcal{V}}}{k_{1}}\,.
$$
It is straightforward to verify using \eqref{estimate-geometric}
that if $X$ is the solution to \eqref{mainsde} corresponding
to any admissible controls
$v_{1}$ and $v_{2}$ and initial condition $x \in \mathbb{R}^d$
then there exists $T_{0}<\infty$ depending only on $x$, such that
\begin{equation}\label{E.001}
\Prob_{x}\bigl(X_{t}\in D) \;\ge\; \frac{1}{2}\qquad
\forall x\in \mathbb{R}^{d}\,,\quad\forall t\ge T_{0}(x)\;.
\end{equation}
By the standard estimates of hitting probabilities for diffusions
(see \cite[Lemma~1.1]{Gruber}) for any $r>0$ there exists a
constant $\gamma>0$ depending only on $r$ and $D$, such that
with $B_{r}(y)$ denoting the open ball of radius $r$ centered at $y\in\mathbb{R}^{d}$
we have
\begin{equation}\label{E.002}
\Prob_{x}\bigl(X_{t}\in B_{r}(y)\bigr) \;\ge\; \gamma \qquad \forall t\in[0,1]\,,
\quad\forall x,y\in D\;.
\end{equation}
Let $\mathrm{I}_{A}(\,\cdot\,)$ denote the indicator
function of a set $A\subset\mathbb{R}^{d}$.
An equivalent statement to \eqref{E.002}
is that if $g:D\to\mathbb{R}_{+}$ is a H\"older continuous function
then there exists a continuous function $\Gamma:\mathbb{R}_{+}\to\mathbb{R}_{+}$,
satisfying $\Gamma(z)>0$ for $z>0$ and depending only
on $D$ and the H\"older constant of $g$, such that
\begin{equation}\label{E.003}
\Exp_{x}\bigl(g(X_{t})\,\mathrm{I}_{D}(X_{t})\bigr) \;\ge\;
\Gamma\Bigl(\max_{y\in D}\;g(y)\Bigr)
\qquad \forall t\in[0,1]\,,\quad \forall x\in D\;.
\end{equation}
Combining \eqref{E.001} and \eqref{E.003} and using the Markov
property, we obtain
\begin{align}\label{E.004}
\Exp_{x}\bigl[g(X_{t})\,\mathrm{I}_{D}(X_{t})\bigr]
\;&\ge\;
\Exp_{x}\Bigl[\Exp_{X_{t-1}}\bigl[g(X_{1})\,\mathrm{I}_{D}(X_{1})\bigr]
\,\mathrm{I}_{D}(X_{t-1})\Bigr]
\nonumber\\[5pt]
&\ge\;
\Gamma\Bigl(\max_{y\in D}\;g(y)\Bigr)\,\Prob_{x}\bigl(X_{t-1}\in D)
\nonumber\\[5pt]
&\ge\;
\frac{1}{2}\,\Gamma\Bigl(\max_{y\in D}\;g(y)\Bigr)
\qquad \forall t\ge T_{0}(x) +1\;.
\end{align}
and for all $x\in\mathbb{R}^{d}$.
Note that if $n$ is sufficiently large, then
$D\subset B_{k_{n}}$ and therefore
the function $x\mapsto \Bar{f}+\frac{1}{k_{n}}- \varPhi_{t_{n}}(x)$
is nonnegative on $D$.
Thus the local H\"older equicontinuity of
$\{\varPhi_{t}\,,\;t>0\}$
(this collection of functions locally share a common H\"older exponent)
allows us to apply \eqref{E.004}
for any fixed $x\in\mathbb{R}^{d}$ to obtain
\begin{multline}\label{E.01}
\Exp_{x}^{\Bar{v}^{t_{n+1}}_{1},v_{2}^*}
\Bigl[\Bigl(\Bar{f}+\frac{1}{k_{n}}
- \varPhi_{t_{n}}\bigl(X(t_{n+1}-t_{n})\bigr)\Bigr)\,
\mathrm{I}_{D}\bigl(X(t_{n+1}-t_{n})\bigr)\Bigr] \\
\ge\; \frac{1}{2}\;\Gamma\Bigl( \Bar{f}+\frac{1}{k_{n}}
- \min_{y\in D} \varPhi_{t_{n}}(y)\Bigr)\,,
\end{multline}
for all $n$ large enough.
For $A\subset\mathbb{R}^{d}$ and $x\in\mathbb{R}^{d}$ we define
\begin{equation*}
\varPsi_{n}(x;A) \;\df\;
\Exp_{x}^{\Bar{v}^{t_{n+1}}_{1},v_{2}^*}
\Bigl[\varPhi_{t_{n}}\bigl(X(t_{n+1}-t_{n})\bigr)\,
\mathrm{I}_{A}\bigl(X(t_{n+1}-t_{n})\bigr)\Bigr]
\end{equation*}
By \eqref{VIcompA}, \eqref{E.00} and \eqref{E.01} we have
\begin{align}\label{E.02}
\varPhi_{t_{n+1}}(x) 
\;&\le\; \Exp_{x}^{\Bar{v}^{t_{n+1}}_{1},v_{2}^*}
\bigl[\varPhi_{t_{n}}\bigl(X(t_{n+1}-t_{n})\bigr)\bigr]
\notag\\[5pt]
&=\; \varPsi_{n}(x;D) + \varPsi_{n}(x;B_{k_{n}}\setminus D)
+ \varPsi_{n}(x;B^{c}_{k_{n}})
\notag\\[5pt]
&\le\;
\Bigl(\Bar{f}+\frac{1}{k_{n}}\Bigr)\,
\Exp_{x}^{\Bar{v}^{t_{n+1}}_{1},v_{2}^*}
\Bigl[\mathrm{I}_{B_{k_{n}}}\bigl(X(t_{n+1}-t_{n})\bigr)\Bigr]
\notag\\
&\mspace{100mu}
- \frac{1}{2}\;\Gamma\Bigl( \Bar{f}+\frac{1}{k_{n}}
- \min_{y\in D} \varPhi_{t_{n}}(y)\Bigr)
+ \varPsi_{n}(x;B^{c}_{k_{n}})
\notag\\[5pt]
&\le\;
\Bar{f}+\frac{1}{k_{n}}
- \frac{1}{2}\;\Gamma\Bigl( \Bar{f}+\frac{1}{k_{n}}
- \min_{y\in D} \varPhi_{t_{n}}(y)\Bigr)
+ \varPsi_{n}(x;B^{c}_{k_{n}})\;.
\end{align}
We claim that $\varPsi_{n}(x;B^{c}_{k_{n}})\to0$ as $n\to\infty$.
Indeed if $X$ is the solution to \eqref{mainsde} corresponding
to any admissible controls
$v_{1}$ and $v_{2}$ and initial condition $x \in \mathbb{R}^d$
then by \eqref{estimate-geometric} we have
\begin{equation}\label{E.03}
\Exp_{x} \bigl[\varPhi_{t}\bigl(X(s)\bigr)\,
\mathrm{I}_{B^{c}_{R}}\bigl(X(s)\bigr)\bigr] \;\le\;
\bigl\|\varPhi_{t}\,\mathrm{I}_{B^{c}_{R}}\bigr\|_{\mathcal{V}}
\left(\frac{k_{0}}{2 k_{1}}
+ \mathcal{V}(x) \E^{ -2 k_{1}s} \right)\,,
\end{equation}
By Lemma~\ref{lem3.2} we have
\begin{equation}\label{E.04}
\bigl\| \varPhi_{t}\,\mathrm{I}_{B^{c}_{R}}\bigr\|_{\mathcal{V}} \;\le\;
\|\varPhi_{0}\|_{\mathcal{V}}\,
\left( \frac{k_{0}}{2 k_{1}\,\inf_{x\in B_{R}^{c}}\;\mathcal{V}(x)}
+ \E^{ -2 k_{1}t}\right)\;.
\end{equation}
It follows by \eqref{E.03}--\eqref{E.04} that
\begin{equation*}
\Exp_{x} \bigl[\varPhi_{t}\bigl(X(s)\bigr)\,
\mathrm{I}_{B^{c}_{R}}\bigl(X(s)\bigr)\bigr]
\xrightarrow[\min\{t,R\}\to\infty]{}0
\end{equation*}
uniformly in $s\ge0$, which proves that
$\varPsi_{n}(x;B^{c}_{k_{n}})\to0$ as $n\to\infty$.
Thus, taking limits as $n\to\infty$ in \eqref{E.02}, we obtain
\begin{equation}\label{E.05}
f(x) \;\le\; \Bar{f}
- \frac{1}{2}\;\Gamma\Bigl(\Bar{f} - \min_{y\in D} f(y)\Bigr)
\qquad \forall x\in\mathbb{R}^{d}\;.
\end{equation}
Taking the supremum over $x\in\mathbb{R}^{d}$ of the left hand side
of \eqref{E.05} it follows that
$$\Gamma\Bigl(\Bar{f} - \min_{y\in D} f(y)\Bigr)=0$$ which implies that
$f$ is constant on $D$.
Since $D$ was arbitrary if follows that $f$ must be a constant.

We next show that $f$ is unique.
We argue by contradiction.
Suppose that $\varPhi_{t'_{n}}\to f'$ over some increasing sequence
$\{t'_{n}\}$ with $t'_{n}\uparrow\infty$ as $n\to\infty$.
Without loss of generality we assume $t_{n}<t'_{n}<t_{n+1}$ for each $n$.
By \eqref{VIcompA} we have
\begin{equation}\label{E.06}
\varPhi_{t_{n+1}}(x)
\;\le\; \Exp_{x}^{\Bar{v}^{t_{n+1}}_{1},v_{2}^*}
\bigl[\varPhi_{t'_{n}}\bigl(X(t_{n+1}-t'_{n})\bigr)\bigr]\;,
\end{equation}
and taking limits as $n\to\infty$ in \eqref{E.06} we obtain $f\le f'$.
Reversing the roles of $f$ and $f'$, shows that $f=f'$.

By Lemma~\ref{lem3.3} we have
$$
\varphi(t,x) \;=\; \overline{\varphi}(t,x)
+\int^t_{0} \E^{s-t} (\beta - \overline{\varphi}(s, 0)) \,\D{s} \,.
$$
Hence, since $\overline\varphi(t,x)$ converges
to $\varphi^*(x) + f$,
we obtain that
$\varphi(t, x) \to \varphi^*(x) + \beta$ as $t \to \infty$. 
\end{proof}

\section{Risk-Sensitive Control}
In this section, we apply the results from Section 3
to study the convergence of a relative value iteration scheme for
the risk-sensitive control problem which is described as follows.
Let $U$ be a compact metric space and $V \;=\;{\mathcal P}(U)$
denote the space of all probability measures on $U$ with Prohorov topology.
We consider the risk-sensitive control problem with state
equation given by the controlled s.d.e. (in relaxed form)
\begin{equation}\label{risksensde}
\D{X}(t) \;=\;b(X(t), v(t)) \,\D{t} + \sigma(X(t))\, \D{W}(t)\,,
\end{equation}
and payoff criterion
$$
J(x, v) \;\df\; \liminf_{T \to \infty}\;\frac{1}{T}\;
\ln \Exp_{x} \biggl[\exp\biggl(\int^T_{0} h(X(t), v(t)) \,\D{t}\biggr)
\Bigm| X(0) = x\biggr]
\,.
$$
This is called the risk-sensitive payoff because in some sense it is sensitive to
higher moments of the running cost and not merely its mean,
thus capturing `risk' in the sense understood in economics \cite{Whittle}.

All processes in \eqref{risksensde} are defined in a common probability space
$(\varOmega, \, {\mathcal F}, \, P)$
which is assumed to be complete.
The process $W$ is an $\mathbb{R}^d$-valued standard Wiener process
which is independent of
the initial condition $X_{0}$ of \eqref{mainsde}.
The control $v$ is a $V$-valued process which is
jointly measurable in $(t, \omega) \in [0,\infty) \times\varOmega$
and non-anticipative, i.e., for $s < t$, $W(t) - W(s)$ is independent of
${\mathcal F}_s \;\df$ the completion of $\sigma(X_{0}, v(r), W(r), r \le s)\,$.
We denote the
set of all such controls (admissible controls) by $\mathcal{U}$.

\smallskip
\noindent\emph{Assumptions on the Data:}
We assume the following properties for the coefficients $b$ and $\sigma$:

\begin{itemize}
\item[(B1)]
The functions $b$ and $\sigma$ are continuous and bounded, and also
Lipschitz continuous in $x\in\mathbb{R}^d$ uniformly over $v\in V$.
Also $(\sigma \sigma\transp)^{-1}$ is Lipschitz continuous.

\smallskip
\item[(B2)]
For each $R>0$ there exists a constant $\kappa(R) > 0$ such that
$$
z\transp a(x) z \;\ge\; \kappa(R) \|z\|^2
\qquad \text{for~all~} \|x\|\le R \text{~and~} z \in \mathbb{R}^d \,,
$$
where $a\;\df\;\sigma \sigma\transp$.
\end{itemize}

\smallskip
\noindent\emph{Asymptotic Flatness Hypothesis:}
We assume the following property:

\begin{itemize}
\item[(B3)]
\begin{itemize}
\item[(i)]
There exists a $c > 0$ and a positive definite matrix $Q$ such that
for all $x$, $y \in \mathbb{R}^d$ with $x \neq y$, we have
\begin{multline*}
2 \bigl(b(x, v) - b(y, v)\bigr)\transp Q (x-y) + \mathrm{tr}
\Bigl(\bigl(\sigma(x)-\sigma(y)\bigr)\bigl(\sigma(x)-\sigma(y)\bigr)\transp Q\Bigr)
\\[5pt]
- \frac{\bigl\|
\bigl(\sigma(x)-\sigma(y)\bigr)\transp Q (x-y)\bigr\|^2}
{(x-y)\transp Q(x-y)}
\;\le\; -c\, \|x-y\|^2\,.
\end{multline*}
\item[(ii)]
Let $\mathrm{Lip}(f)$ denote the Lipschitz constant of a Lipschitz
continuous function $f$. Then
$$
2\, \|\sigma \sigma\transp\|^2_{\infty}\, \mathrm{Lip}(h)\,
\mathrm{Lip}\bigl((\sigma\sigma\transp)^{-1}\bigr) \, \le\, c^2 \;.
$$
\end{itemize}
\end{itemize}

The asymptotic flatness hypothesis was first introduced by \cite{Basak-Bhattacharya}
for the study of ergodicity in degenerate diffusions and is a little more general
than the condition introduced by \cite{Fleming-McEneaney} in risk-sensitive
control to facilitate the analysis of the corresponding HJB equation,
which is our motivation as well.
An important consequence of this condition is that if we fix a
non-anticipative control process and consider two diffusion processes with this
control differing only in their initial conditions, they approach
each other in mean at an exponential rate \cite[Lemma~7.3.4]{AriBorkarGhosh}. 
This ensures a bounded gradient for the solution of the HJB equation, 
a key step in the analysis of its well-posedness.

We quote the following result from \cite[Theorems~2.2--2.3]{BorkarSuresh}:
\begin{theorem}\label{thm4.1}
Assume \textup{(B1)--(B3)}. The p.d.e.
\begin{equation}\label{Isaac-risk}
 \begin{split}
 \beta &\;=\;
 \min_{v\in V}\; \max_{w \in \mathbb{R}^d}\;
 \Bigl[\Tilde{L} \varphi^*(x, w, v) + h(x,v)
- \tfrac{1}{2}\, w\transp \bigl(a^{-1}(x)\bigr) w \Bigr]\\
&\;=\;
\max_{w \in \mathbb{R}^d}\; \min_{v\in V}\;
\Bigl[\Tilde{L} \varphi^*(x, w, v) + h(x,v)
- \tfrac{1}{2}\, w\transp \bigl(a^{-1}(x)\bigr) w \Bigr]\;, \\[5pt]
\varphi^*(0) &\;=\; 0 \,,
 \end{split}
\end{equation}
where
\begin{equation*}
 \Tilde{L} f(x, w, v) \;\df\;(b(x, v) + w) \cdot \nabla f(x) + \frac{1}{2}\,
\mathrm{tr}\bigl(a(x) \nabla^2 f(x)\bigr)\,,\quad
 f\in C^{2}(\mathbb{R}^d)\;,
\end{equation*}
has a unique solution
$(\beta, \varphi^*) \in \mathbb{R} \times C^2(\mathbb{R}^d) \cap o(\|x\|)$.
Moreover, $\beta$ is the value of the risk-sensitive control problem and
any measurable outer minimizing selector
in \eqref{Isaac-risk} is risk-sensitive optimal.
Also in \eqref{Isaac-risk}, the supremum can be restricted
to a closed ball $\Tilde{V} = \overline{B_{R}}$ for
$$
 R \;\df \; \frac{\mathrm{Lip}(h)}{c}
 + \frac{\mathrm{Lip}\bigl((\sigma\sigma\transp)^{-1}\bigr) K^2}{2 \sqrt{c}} \ ,
$$
where $K$ is the smallest positive root (using \textup{(B3)$\,$(ii)}) of
$$
 \frac{\sqrt{c}}{2}\, \|\sigma \sigma\transp\|_{\infty}\,
\mathrm{Lip}\bigl((\sigma\sigma\transp)^{-1}\bigr)\, x^2 - c^{\nicefrac{5}{4}} x +
\mathrm{Lip} (h) \|\sigma \sigma\transp\|_{\infty} \;=\; 0 \, .
$$
\end{theorem}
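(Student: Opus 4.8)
The plan is to realize the risk-sensitive control problem as an ergodic zero-sum stochastic differential game by means of the logarithmic (Hopf--Cole) transformation, and then to invoke the ergodic game theory of Section~2 once the adversary's action set has been confined to a compact ball. First I would pass to the multiplicative functional $\Exp_{x}[\exp(\int_{0}^{T}h(X,v)\,\D{t})]$; its logarithm, in the vanishing-discount (equivalently, large-time) limit, formally solves an ergodic Hamilton--Jacobi equation of the form $\beta=\min_{v}[b(x,v)\cdot\nabla\varphi^{*}+\tfrac12\,\mathrm{tr}(a\nabla^{2}\varphi^{*})+\tfrac12\,\nabla\varphi^{*}\transp a\,\nabla\varphi^{*}+h(x,v)]$. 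Rewriting the quadratic gradient term by Fenchel duality, $\tfrac12\,p\transp a\,p=\max_{w\in\mathbb{R}^{d}}\bigl[w\cdot p-\tfrac12\,w\transp a^{-1}w\bigr]$ with the maximum attained at $w=a\,p$, produces exactly the Isaac's structure of \eqref{Isaac-risk}: the minimizing player chooses $v\in V$, the maximizing player chooses a drift perturbation $w\in\mathbb{R}^{d}$ penalized by $\tfrac12\,w\transp a^{-1}(x)\,w$, and $\Tilde{L}$ is simply the generator with drift $b+w$.

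The crux is to derive an a priori gradient bound $\|\nabla\varphi^{*}\|_{\infty}<\infty$ from asymptotic flatness, which is what legitimizes restricting $w$ to a compact set. The consequence of \textup{(B3)} recorded in \cite[Lemma~7.3.4]{AriBorkarGhosh} is that two solutions of \eqref{risksensde} driven by a common control but differing initial data approach each other in mean at an exponential rate governed by $c$. Coupling this contraction with the stochastic representation of $\varphi^{*}$ from Section~2, I would bound $|\varphi^{*}(x)-\varphi^{*}(y)|$ by the time integral of the mean trajectory distance, weighted by the Lipschitz constant in $x$ of the effective running payoff $h(x,v)-\tfrac12\,w\transp a^{-1}(x)\,w$. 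The subtlety is that this Lipschitz constant involves $\mathrm{Lip}(h)$ together with $\tfrac12\,\|w\|^{2}\,\mathrm{Lip}\bigl((\sigma\sigma\transp)^{-1}\bigr)$, while the adversary's optimal response is $w^{*}=a(x)\nabla\varphi^{*}(x)$, so that $\|w^{*}\|\le\|\sigma\sigma\transp\|_{\infty}\,\mathrm{Lip}(\varphi^{*})$; thus the Lipschitz bound on $\varphi^{*}$ and the magnitude of $w^{*}$ are mutually coupled. Closing this loop yields a self-consistency inequality that is affine in $K^{2}$, with $K\df\|w^{*}\|$, and which upon equality becomes the displayed quadratic in $K$. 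Hypothesis \textup{(B3)$\,$(ii)} is precisely the nonnegativity of its discriminant, guaranteeing a smallest positive root $K$; feeding $K$ back through $w^{*}=a\,\nabla\varphi^{*}$ delivers the bound $\|\nabla\varphi^{*}\|\le R$ and hence $w^{*}\in\Tilde{V}=\overline{B_{R}}$.

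With the adversary confined to the compact set $\Tilde{V}$, the game falls squarely within the setting of Section~2: \textup{(B1)--(B2)} provide the analogues of \textup{(A1)--(A2)}, the penalized running payoff is continuous and locally Lipschitz, and asymptotic flatness supplies the stability required for the \textup{(A3$'$)}-type estimates. I would then apply Theorem~\ref{thm2.2} to the game with action spaces $V$ and $\Tilde{V}$ to obtain a solution $(\beta,\varphi^{*})\in\mathbb{R}\times C^{2}(\mathbb{R}^{d})$ of \eqref{Isaac-risk}, with $\beta$ the saddle value. The gradient bound of the preceding step ensures that enlarging $\Tilde{V}$ leaves the value unchanged, so the supremum over $w\in\mathbb{R}^{d}$ may indeed be taken over $\overline{B_{R}}$. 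Undoing the logarithmic transformation identifies $\beta$ with the value of the risk-sensitive control problem and shows that any measurable outer minimizing selector in $v$ is optimal. Uniqueness in the class $C^{2}\cap o(\|x\|)$ follows from a verification argument: given two solutions, the stochastic representation forces equality of the ergodic constants, and after normalizing $\varphi^{*}(0)=0$ the $o(\|x\|)$ growth restriction together with comparison along the controlled flow forces equality of the functions.

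The main obstacle is the second step: converting asymptotic flatness into a quantitative bounded-gradient estimate while resolving the coupling between $\mathrm{Lip}(\varphi^{*})$ and the size $K$ of the adversary's optimal response. Once that bound is in hand, the remainder either transcribes the ergodic game theory already developed in Section~2 or is routine.
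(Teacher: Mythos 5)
The paper does not prove Theorem~\ref{thm4.1} at all: it is imported verbatim from \cite[Theorems~2.2--2.3]{BorkarSuresh}, so there is no in-paper argument to compare yours against. Judged on its own, your outline follows the standard (and, in substance, the cited reference's) route --- Hopf--Cole transformation of the multiplicative HJB equation, Fenchel duality $\tfrac12 p\transp a p=\max_{w}[w\cdot p-\tfrac12 w\transp a^{-1}w]$ to expose the Isaac's structure, and an a priori Lipschitz bound on $\varphi^{*}$ from asymptotic flatness that closes a self-consistency loop for $\|w^{*}\|=\|a\nabla\varphi^{*}\|$; your observation that \textup{(B3)$\,$(ii)} is exactly the nonnegativity of the discriminant of the displayed quadratic checks out.

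Two places where the sketch would need real work rather than transcription. First, you propose to feed the reduced game directly into Theorem~\ref{thm2.2}, but the Section~2 framework assumes a \emph{nonnegative} running payoff $\Bar h\ge 0$ on a product of compact action spaces with relaxed controls, whereas the effective payoff here, $h(x,v)-\tfrac12 w\transp a^{-1}(x)w$, is not nonnegative (and under \textup{(B1)--(B2)} alone $a^{-1}$ need not be bounded, so it is not even bounded below before the restriction to $\overline{B_R}$ is justified); moreover the $w$-player's set is compact only \emph{after} the gradient bound is established, so the order of quantifiers matters and one must work with approximating problems rather than invoking Section~2 wholesale. Second, "undoing the logarithmic transformation identifies $\beta$ with the value of the risk-sensitive control problem" is the step that is genuinely delicate: passing from the ergodic constant of the transformed equation to $\lim_T \tfrac1T\ln\Exp_x[\exp(\int_0^T h)]$ requires a verification argument for the multiplicative functional (an exponential martingale / change-of-measure argument together with growth control on $\varphi^{*}$), not merely exponentiation. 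Neither point is fatal --- both are handled in \cite{BorkarSuresh} --- but as written they are asserted rather than proved.
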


For the stochastic differential game in \eqref{Isaac-risk} we
consider the following relative value iteration equation:
\begin{align*}
\frac{\partial \varphi}{\partial t}(t, x) &\;=\;
\min_{v \in V}\; \max_{w \in \Tilde{V}}\;
\Bigl[\Tilde{L} \varphi(t, x, w, v) + h(x,v)
- \tfrac{1}{2}\, w\transp \bigl(a^{-1}(x)\bigr) w \Bigr] \, - \, \varphi(t, 0)\;,
\\[5pt]
 \varphi(0, x) &\;=\; \varphi_{0}(x) \;,
\end{align*}
where $\varphi_{0} \in C_{\mathcal{V}}(\mathbb{R}^d) \cap C^2(\mathbb{R}^d)$ with
$$\mathcal{V}(x) \;=\;\frac{(x\transp Q x)^{1 + \alpha}}
{\varepsilon + (x\transp Q x)^{\nicefrac{1}{2}}}\,,$$
for some positive constants $\varepsilon$ and $\alpha$.
Here note that Assumption~(B3) implies
Assumption~(A3) of Section 2 for the Lyapunov function $\mathcal{V}$ given above,
see \cite[equation (7.3.6), p.~257]{AriBorkarGhosh}.

By Theorems~\ref{thm3.1} and \ref{thm4.1} the following holds.

\begin{theorem}\label{thm4.2}
Assume \textup{(B1)--(B3)}.
For each $\varphi_{0} \in C_{\mathcal{V}}(\mathbb{R}^d) \cap C^2(\mathbb{R}^d)$,
$\varphi(t, x)$ converges to $\varphi^*(x) + \beta$ as $t \to \infty$.
\end{theorem}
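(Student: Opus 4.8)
The plan is to recognize Theorem~\ref{thm4.2} as essentially a corollary of Theorems~\ref{thm3.1} and \ref{thm4.1}, the only real work being to verify that the risk-sensitive problem, once its Hamiltonian has been restricted to a compact set, fits the framework of Section~3. The decisive fact is supplied by Theorem~\ref{thm4.1}: although the inner supremum in the risk-sensitive Isaac's equation \eqref{Isaac-risk} ranges a priori over all of $\mathbb{R}^d$, it may be restricted without loss to the compact ball $\Tilde{V}=\overline{B_{R}}$. This is precisely what permits us to view the problem as an instance of the zero-sum stochastic differential game of Section~2, with the maximizing player controlling $w\in\Tilde{V}$ and the minimizing player controlling $v\in V$.

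Concretely, I would make the identification $U_{1}=\Tilde{V}$ and $U_{2}=U$, with effective drift $\Bar{b}(x,w,v)\df b(x,v)+w$ and effective running payoff $\Bar{h}(x,w,v)\df h(x,v)-\tfrac{1}{2}\,w\transp a^{-1}(x)\,w$, so that $\Tilde{L}$ becomes the extended controlled generator $\Bar{L}$ of Section~2. Since $\Tilde{V}$ is compact, the relaxed spaces $V_{1}=\mathcal{P}(\Tilde{V})$ and $V_{2}=V$ are well defined; moreover the supremum of the Hamiltonian over $\mathcal{P}(\Tilde{V})$ is attained at a Dirac mass and hence equals the supremum over points $w\in\Tilde{V}$, so the point-valued equation \eqref{Isaac-risk} coincides with the relaxed ergodic Isaac's equation \eqref{Isaac} for this game, and the relative value iteration displayed before the theorem is exactly \eqref{relativevalueiteration} for the same data.

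Next I would check that Assumptions~(A1)--(A3) hold for the reformulated game. Assumption~(A1) is immediate: by (B1) the coefficients $b,\sigma$ are bounded, and since $w$ ranges over the compact ball $\Tilde{V}$, the drift $b(x,v)+w$ is bounded and Lipschitz in $x$, so the linear-growth and local-Lipschitz requirements hold. Assumption~(A2) is identical to (B2). Assumption~(A3) is furnished by the remark preceding the theorem: for the stated Lyapunov function $\mathcal{V}$, condition (B3) yields the geometric drift inequality via \cite[(7.3.6), p.~257]{AriBorkarGhosh}, and the payoff bound $\max_{w,v}\Bar{h}\le k_{2}\mathcal{V}$ then follows because $h$ is Lipschitz (hence of linear growth), the term $-\tfrac12 w\transp a^{-1}w$ is bounded on $\Tilde{V}$, while $\mathcal{V}$ grows superlinearly. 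Two points deserve note: the solution $\varphi^{*}$ produced by Theorem~\ref{thm4.1} lies in $o(\|x\|)$ and therefore in $C_{\mathcal{V}}(\mathbb{R}^d)$, consistent with the hypotheses of Theorem~\ref{thm3.1}; and although $\Bar{h}$ here need not be nonnegative (unlike the running payoff in Section~2), this causes no difficulty, since the existence of $(\beta,\varphi^{*})$ is taken directly from Theorem~\ref{thm4.1} rather than reconstructed by the vanishing-discount argument of Section~2, and the convergence machinery of Theorem~\ref{thm3.1} (through Lemmas~\ref{lem3.2} and \ref{lem3.3}) never uses the sign of the payoff.

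With the hypotheses in place the conclusion is immediate: Theorem~\ref{thm4.1} identifies $(\beta,\varphi^{*})$ as the solution of the ergodic Isaac's equation and $\beta$ as the value of the game, and Theorem~\ref{thm3.1} then asserts that the relative value iteration $\varphi(t,x)$ converges to $\varphi^{*}(x)+\beta$ as $t\to\infty$. The main obstacle, and the only genuinely non-routine ingredient, is the compactness reduction: it is Theorem~\ref{thm4.1}'s guarantee that the Hamiltonian's supremum is attained on $\overline{B_{R}}$ that renders the compact-control-space machinery of Sections~2--3 applicable. Without it the $w$-variable would range over the noncompact $\mathbb{R}^d$, and the measurable-selection and geometric-ergodicity arguments underlying Theorem~\ref{thm3.1} would not apply directly.
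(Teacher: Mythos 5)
Your proposal is correct and follows exactly the route the paper intends: the paper offers no proof beyond the assertion that Theorem~\ref{thm4.2} follows from Theorems~\ref{thm3.1} and~\ref{thm4.1} together with the preceding remark that (B3) yields (A3) for the stated Lyapunov function $\mathcal{V}$. Your write-up simply makes explicit the reduction the paper leaves implicit --- recasting the risk-sensitive Hamiltonian as a two-player game with $U_{1}=\Tilde{V}$ compact, checking (A1)--(A3), and noting that the possible negativity of the effective payoff and the membership $\varphi^{*}\in C_{\mathcal{V}}(\mathbb{R}^{d})$ cause no trouble --- which is a useful elaboration rather than a different argument.
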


\medskip
The relative value iteration equation for the
risk-sensitive control problem is given by
\begin{equation}\label{risksensitivervi}
\begin{split}
\frac{\partial \psi}{\partial t}(t,x) \,&=\, \min_{v \in V}\;
\bigl[ L \psi(t, x, v) + (h(x,v) - \ln \psi(t,0)) \psi(t,x)\bigr]\,,\\[5pt]
\psi(0, x) \,&=\, \psi_{0}(x) \;,
\end{split}
\end{equation}
where
$$
 L f(x, v) \;\df\; b(x, v) \cdot \nabla f(x) + \frac{1}{2}\,
 \mathrm{tr}\bigl(a(x)\nabla^2 f(x)\bigr)\,,\quad
 f\in C^{2}(\mathbb{R}^d)\;.
$$

That one has $\ln \psi(t,0)$ instead of $\psi(t,0)$ as the `offset' is only natural,
because we are trying to approximate the \textit{logarithmic} growth rate of the cost.
We have the following theorem:

\begin{theorem}
Let $\psi^*$ be the unique solution in the
class of functions which grow no faster than $\E^{\|x\|^2}$
of the HJB equation for the risk-sensitive control problem given by
\begin{equation*}
\beta \psi^* \,=\, \min_{v \in V}\; \bigl[ L \psi^*(x, v) + h(x,v) \psi^* \bigr]\,,
\quad \psi^*(0) \;=\; 1 \,.
\end{equation*}
Under assumptions \textup{(B1)--(B3)} the solution $\psi(t, x)$ of
the relative value iteration in
\eqref{risksensitivervi} converges as $t\to\infty$ to
$\E^{\beta} \psi^*(x)$ where $\beta$ is the value of
the risk-sensitive control problem given in Theorem~\ref{thm4.1}.
\end{theorem}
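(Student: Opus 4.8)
The plan is to reduce the multiplicative relative value iteration \eqref{risksensitivervi} to the additive game iteration already analyzed in Theorem~\ref{thm4.2}, by means of the logarithmic transformation $\varphi\df\ln\psi$, and then to read off the limit by exponentiating. This substitution is dictated by the product nonlinearity $h(x,v)\psi$ and the offset $\ln\psi(t,0)$ in \eqref{risksensitivervi}, and by the fact that the Isaac equation \eqref{Isaac-risk} is precisely the logarithmic form of the stationary HJB equation for $\psi^*$.

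First I would record the pointwise identities obtained by writing $\psi=\E^{\varphi}$ with $\varphi$ smooth:
\begin{equation*}
L\psi \;=\; \E^{\varphi}\Bigl(L\varphi + \tfrac12\,\nabla\varphi\transp a(x)\,\nabla\varphi\Bigr)\,,
\qquad
\frac{\partial\psi}{\partial t} \;=\; \E^{\varphi}\,\frac{\partial\varphi}{\partial t}\,.
\end{equation*}
Dividing \eqref{risksensitivervi} by $\E^{\varphi}>0$ and using $\ln\psi(t,0)=\varphi(t,0)$, the iteration for $\psi$ becomes
\begin{equation*}
\frac{\partial\varphi}{\partial t}(t,x) \;=\; \min_{v\in V}\Bigl[L\varphi(t,x,v)
+ \tfrac12\,\nabla\varphi\transp a(x)\nabla\varphi + h(x,v)\Bigr] - \varphi(t,0)\,,
\end{equation*}
while the same computation applied to $\beta\psi^*=\min_v[L\psi^*+h\psi^*]$ gives $\beta=\min_v[L\varphi^*+\tfrac12(\nabla\varphi^*)\transp a\,\nabla\varphi^*+h]$ with $\varphi^*(0)=0$. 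I would then eliminate the quadratic gradient term through the elementary duality $\tfrac12\,p\transp a(x)\,p=\max_{w\in\mathbb{R}^d}[\,w\cdot p-\tfrac12\,w\transp a^{-1}(x)w\,]$, whose maximizer is $w=a(x)p$. Substituting $p=\nabla\varphi$ and recalling the definition of $\Tilde{L}$, the equation for $\varphi$ takes the form
\begin{equation*}
\frac{\partial\varphi}{\partial t}(t,x) \;=\; \min_{v\in V}\;\max_{w\in\mathbb{R}^d}\;
\Bigl[\Tilde{L}\varphi(t,x,w,v)+h(x,v)-\tfrac12\,w\transp a^{-1}(x)w\Bigr] - \varphi(t,0)\,,
\end{equation*}
and the stationary relation becomes exactly the Isaac equation \eqref{Isaac-risk}; by the uniqueness asserted in Theorem~\ref{thm4.1} this identifies $\varphi^*=\ln\psi^*$ with the solution furnished there.

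The remaining point, and the main obstacle, is to replace the unconstrained maximization over $w\in\mathbb{R}^d$ by the maximization over the ball $\Tilde{V}=\overline{B_{R}}$ appearing in Theorem~\ref{thm4.2}. Since the maximizer is $w=a(x)\nabla\varphi(t,x)$, the two agree once the uniform bound $\|a(x)\nabla\varphi(t,x)\|\le R$ holds for all $(t,x)$. This is precisely a uniform-in-time gradient estimate for $\varphi(t,\cdot)$, and it is here that the asymptotic flatness hypothesis~(B3) enters: by \cite[Lemma~7.3.4]{AriBorkarGhosh} two solutions of \eqref{risksensde} driven by the same control and differing only in initial data converge in mean at an exponential rate, which through the stochastic representation of $\varphi$ (cf.\ \eqref{visol}) yields a Lipschitz bound on $x\mapsto\varphi(t,x)$ uniform in $t$, with constant compatible with the $R$ of Theorem~\ref{thm4.1}. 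The same estimate shows that $\varphi_0\df\ln\psi_0\in C_{\mathcal{V}}(\mathbb{R}^d)\cap C^2(\mathbb{R}^d)$ for a positive admissible datum $\psi_0$, and transfers well-posedness of \eqref{risksensitivervi} from that of the $\varphi$-iteration. Consequently $\varphi$ is the solution of the relative value iteration of Theorem~\ref{thm4.2} with initial condition $\varphi_0$.

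Finally I would invoke Theorem~\ref{thm4.2}, which gives $\varphi(t,x)\to\varphi^*(x)+\beta$ as $t\to\infty$, and exponentiate:
\begin{equation*}
\psi(t,x)\;=\;\E^{\varphi(t,x)}\;\xrightarrow[t\to\infty]{}\;\E^{\varphi^*(x)+\beta}
\;=\;\E^{\beta}\,\E^{\varphi^*(x)}\;=\;\E^{\beta}\,\psi^*(x)\,.
\end{equation*}
Because $\varphi^*\in o(\|x\|)$ by Theorem~\ref{thm4.1}, the function $\psi^*=\E^{\varphi^*}$ grows slower than $\E^{\|x\|^2}$ and is therefore the unique solution of the stationary risk-sensitive HJB equation in the prescribed class, so the limit $\E^{\beta}\psi^*$ is exactly as claimed. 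The only steps requiring genuine work beyond the transformation are the uniform gradient bound of the third paragraph and the verification that the exponential respects the relevant growth classes; both are consequences of~(B3).
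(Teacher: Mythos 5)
Your proposal follows essentially the same route as the paper: the logarithmic substitution $\psi=\E^{\varphi}$, identification of the transformed equation with the Isaac/relative-value-iteration system of \eqref{Isaac-risk} via the Legendre duality $\tfrac12\,p\transp a p=\max_{w}[\,w\cdot p-\tfrac12 w\transp a^{-1}w\,]$, an appeal to Theorem~\ref{thm4.2}, and exponentiation of the limit. The paper compresses all of this into ``a straightforward calculation shows\dots{} then it easily follows,'' whereas you correctly make explicit the one point it glosses over --- the uniform-in-$t$ gradient bound needed to replace $\max_{w\in\mathbb{R}^d}$ by $\max_{w\in\Tilde{V}}$ in the parabolic equation, which you rightly trace back to hypothesis (B3).
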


\begin{proof}
A straightforward calculation shows that $\psi^* =\; \E^{\varphi^*}$, where
$\varphi^*$ is given in Theorem~\ref{thm4.1}.
Then it easily follows that $\psi(t,x) = \E^{\varphi(t,x)}$, where $\varphi$ is
the solution of the relative value iteration for the stochastic differential game
in \eqref{Isaac-risk}.
From Theorem~\ref{thm4.2}, it follows that
$\psi(t,x) \to \E^{\beta} \psi^*(x)$ as $t \to \infty$, which
establishes the claim.
\end{proof}

\section{Acknowledgement}
The work of Ari Arapostathis was supported in part by the Office of Naval
Research under the
Electric Ship Research and Development Consortium.
The work of Vivek Borkar was supported in part by Grant \#11IRCCSG014
from IRCC, IIT, Mumbai.

\def\cprime{$'$}

\end{document}